\g@addto@macro\normalsize{%
  \setlength\abovedisplayskip{7pt}
  \setlength\belowdisplayskip{7pt}
  \setlength\abovedisplayshortskip{5pt}
  \setlength\belowdisplayshortskip{5pt}
}
\definecolor{ddarkbrown}{rgb}{0.5,0.2,0.05} \definecolor{bbluegray}{rgb}{0.05,0,0.5}
\newtheorem{theorem}{Theorem}[section]
\newtheorem{proposition}[theorem]{Proposition}
\newtheorem{lemma}[theorem]{Lemma}
\newtheorem{corollary}[theorem]{Corollary}
\newtheorem{assumption}[theorem]{Assumption}
\newcommand{\xbar}{\overline{x}}
\newcommand{\C}{\mathcal{C}}
\newcommand{\K}{\mathcal{K}}
\newcommand{\dlog}{ D_{L} }
\let\la\langle
\let\ra\rangle
\newcommand{\BSP}{\begin{equation*}\begin{split}}
\newcommand{\ESP}{\end{split}\end{equation*}}
\newcommand{\bigO}{\mathcal{O}}
\newcommand{\BEAS}{\begin{eqnarray*}}
\newcommand{\EEAS}{\end{eqnarray*}}
\newcommand{\BEA}{\begin{eqnarray}}
\newcommand{\EEA}{\end{eqnarray}}
\newcommand{\BEQ}{\begin{equation}}
\newcommand{\EEQ}{\end{equation}}
\newcommand{\BEQS}{\begin{equation*}}
\newcommand{\EEQS}{\end{equation*}}
\newcommand{\BIT}{\begin{itemize}}
\newcommand{\EIT}{\end{itemize}}
\newcommand{\BNUM}{\begin{enumerate}}
\newcommand{\ENUM}{\end{enumerate}}
\newcommand{\BA}{\begin{array}}
\newcommand{\EA}{\end{array}}
\newcommand{\reals}{{\mathbb R}}
\newcommand{\symm}{\mathbb{S}}  
\newcommand{\Tr}{ {\rm Tr} }
\newcommand{\argmin}{\mathop{\rm argmin}}
\title{\textbf{Convex quartic problems: homogenized gradient method and preconditioning}}
\author[1]{Radu-Alexandru Dragomir}
\author[2]{Yurii Nesterov}
\affil[1]{EPFL, Switzerland}
\affil[2]{CORE, UCLouvain, Belgium}
\numberwithin{equation}{section}
\begin{document}

\maketitle
\begin{abstract}
We consider a convex minimization problem for which the objective is the sum of a homogeneous polynomial of degree four and a linear term. Such task arises as a subproblem in algorithms for quadratic inverse problems with a difference-of-convex structure. We design a first-order method called \textit{Homogenized Gradient}, along with an accelerated version, which enjoy  fast convergence rates of respectively $\bigO(\kappa^2/K^2)$ and $\bigO(\kappa^2/K^4)$ in relative accuracy, where $K$ is the iteration counter. The constant $\kappa$ is the {quartic condition number} of the problem.

Then, we show that for a certain class of problems, it is possible to compute a preconditioner for which this condition number is $\sqrt{n}$, where $n$ is the problem dimension. To establish this, we study the more general problem of finding the best quadratic approximation of an $\ell_p$ norm composed with a quadratic map. Our construction involves a generalization of the so-called Lewis weights.
\end{abstract}

\section{Introduction}

\subsection{Problem setup}

We consider the convex minimization problem
\BEQ\label{eq:p}\tag{P}
    \min_{x \in \K} f(x) \triangleq \rho(x) - \la c,x \ra
\EEQ
where $\K$ is a closed convex cone of a Euclidean space $E$, $c$ is a nonzero vector of $E$, and $\rho$ is a convex quartic form, meaning that $\rho$ is convex and
\[ \rho(x) = Q[x,x,x,x], \]
for some symmetric 4-linear form $Q:E^4\rightarrow \reals$. We assume that $\rho$ satisfies the following bounds with respect to a given Euclidean norm $\|\cdot\|$:
\BEQ\label{eq:q_cond}
        \alpha^2 \|x\|^4 \leq \rho(x) \leq \beta^2 \|x\|^4,\quad \forall x \in E,
\EEQ
where $0<\alpha\leq \beta$.
We call $\kappa \triangleq \frac{\beta}{\alpha}$ the \emph{quartic condition number} of $\rho$. Problem \eqref{eq:p} arises as a subproblem in some algorithms for solving a large class of quartic optimization problems, as we detail in Section \ref{s:setup}. Applications include quadratic least squares, phase retrieval, distance matrix completion and matrix factorization.

 We propose to study first-order methods for solving \eqref{eq:p}. In standard gradient methods, the objective gap is bounded as $\bigO(LR^2/K)$, where $K$ is the iteration counter, $R$ is the initial distance to the optimal point and $L$ is the Lipschitz constant of the gradient; accelerated methods allow the convergence speed to be improved to $\bigO(LR^2/K^2)$ \citep{Nesterov1983}. However, these methods require the gradient of $f$ to be globally Lipschitz continuous, which is not the case for quartic function $\rho$. This can be remedied by constraining the optimization on a bounded set, but this generally leads to a highly over-estimated Lipschitz constant.

In this work, we adopt a different approach by exploiting the polynomial quartic structure of convex function $\rho$ and the regularity condition \eqref{eq:q_cond}.

\subsection{Contributions and outline}
Our main goal is to design first-order methods which enjoy fast convergence rates for solving Problem \eqref{eq:p}. We list below our main contributions.

\begin{itemize}
    \item \textbf{Regularity properties of convex quartic functions} (Section \ref{s:quartic_prop}): using the polynomial structure of $\rho$ and the condition \eqref{eq:q_cond}, we establish two crucial properties for our analysis. First, we show that $\rho$ satisfies a \textit{uniform convexity property} of degree 4. Then, we prove that the function $\sqrt{\rho}$ is convex and has Lipschitz continous gradient.
    \item \textbf{Homogenized gradient descent} (Section \ref{s:hom}): as a next step, we show that a solution to Problem \eqref{eq:p} can be obtained by solving a \textit{homogenized problem}
    \BEQ \label{eq:p_hom_intro} \tag{H}
    \min_{ \substack{y \in \K \\ \la c, y \ra = 1} } \sqrt{\rho(y)}.
    \EEQ
    As $\sqrt{\rho}$ is convex and has Lipschitz gradient, we can apply projected gradient descent to \eqref{eq:p_hom_intro}. The uniform convexity of $\rho$ allows to establish faster convergence rates than in the standard smooth convex case. We prove that homogenized gradient descent enjoys a $\bigO(\kappa^2/K^2)$ rate, and that an accelerated technique allows to improve this speed to $\bigO(\kappa^2/K^4)$. The key to obtain this fast sublinear rate is the \textit{restart} mechanism, which has been studied before in situations similar to uniform convexity \citep{Nemirovski1983,Roulet2017,Nesterov2020}.

    \item \textbf{Optimal preconditioning} (Section \ref{s:precond}): as the efficiency of our methods depend on the condition number $\kappa$, we look to improve it by choosing an appropriate Euclidean norm of the form $\|\cdot\|_B$ induced by a positive definite operator $B$.
    We study the case where $\rho$ is of the form
    \BEQ\label{eq:rho_qip_intro}
        \rho(x) = \sum_{i=1}^m \la x, B_i x \ra^2.
    \EEQ
  This structure arises in our main application of interest, the \textit{quadratic inverse problems}. We show that there exists a Euclidean norm $\|\cdot\|_{B^*}$ induced by an operator $B^*$ for which  $\kappa = \sqrt{n}$. Our analysis can be applied to the problem of approximating the function $x \mapsto\left(\sum_{i=1}^m\la B_i x, x\ra^{p}\right)^{1/p}$, $p \geq 1$, by a Euclidean norm. We show that there exists a norm with condition number $n^{1-1/p}$, which is not improvable in general. This norm is induced by an operator $B^* = \sum_{i=1}^m \tau_i^* B_i$, where $\tau^*$ satisfies a given fixed-point equation.




     The coefficients $\tau_i$ can be seen as a generalization of the called \textit{Lewis weights} used in $\ell_p$ subspace approximation \cite{Lewis1978FiniteDS}.

    We then show that $\tau^*$ can be computed with a fixed-point algorithm, which runs in $\mathcal{\tilde{O}}(mrn^2)$ time (the notation $\mathcal{\tilde{O}}$ hiding logarithmic factors), where $n = \dim E$ and $r$ is the maximal rank of the $B_i$'s. Using $B^*$ allows significant gains over the simpler choice $\overline{B}=\sum_{i=1}^m B_i$ if $m \gg n$ and the set of matrices $B_1,\dots B_m$ has \textit{high coherence}.

    \item \textbf{Numerical experiments} (Section \ref{s:numerical}): finally, we apply our algorithms to several problems of the form \eqref{eq:rho_qip_intro}. We generate instances of $\rho$ with various condition number and coherence values in order to demonstrate the impact of homogenization, acceleration and preconditioning. 
\end{itemize}

\subsection{Related work}

\subsubsection{Polynomial growth and fast convergence rates} The use of a lower approximation of the objective function in order to improve convergence rates has been long studied in first-order optimization. When the lower bound is quadratic, this amounts to strong convexity and allows to obtain linear convergence. If it is polynomial of some other degree, linear convergence is out of reach but {faster sublinear rates} can be obtained. Several variants of polynomial lower bound have been studied, such as \textit{uniform convexity} \citep{Juditsky2014,Nesterov2020}, Kurdyka–Łojasiewicz property \citep{Bolte2014PALM,Bolte2017errour_bounds,Roulet2017} or other conditions \citep{Freund2018}; see also \citep{Doikov2022} for lower complexity bounds in this setting. Compared to previous work, the novelty of our setting is that the uniform convexity property does not hold for the objective function that we are trying to minimize, but its square (see Section \ref{ss:convrates}). 
Minimization of convex quartic polynomials has also been tackled by higher-order tensor methods \citep{Bullins2018,Nesterov2020}.

\subsubsection{Bregman methods} Another approach which has been applied to quartic problems is relatively-smooth minimization \citep{Bauschke2017,Lu2016}. This method relies on the choice of a reference function $h$ whose Hessian upper bounds that of $f$, allowing to obtain possibly a better efficiency than the standard Euclidean gradient method. In the case of quartic functions, this technique has been applied by choosing $h$ to be a simple polynomial of degree 4 \citep{Bolte2018,Mukkamala2019b,Dragomir2019}. For convex functions, such scheme enjoys a convergence rate of $\bigO(LD/K)$, where $L$ is the relative Lipschitz constant and $D$ the initial Bregman distance to the optimum. 

A main drawback of Bregman methods is that they are difficult to accelerate: the lower bound by \citep{Dragomir2019a} guarantees that no Bregman-type algorithm can do better than $\bigO(LD/K)$ for generic reference functions $h$. Even with additional regularity assumptions, known acceleration results are limited to local or asymptotic regimes \citep{Hanzely2018,Hendrikx2020}. Another caveat is that relatively-smooth minimization does not allow to use the lower bound on $\rho$ in \eqref{eq:q_cond} in order to improve convergence guarantees\footnote{One could wonder if the term $\alpha\|x\|^4$ implies a \textit{relative strong convexity} \citep{Lu2016} with respect to a well-chosen quartic function $h$. In turns out that this is not true in general, apart from trivial choices such as $h =\rho$. This motivates the need to consider the weaker notion of \textit{uniform convexity}, which will not be enough to guarantee linear convergence but will lead to improved sublinear rates.}.

\subsubsection{Preconditioning and optimal quadratic norms} The idea of finding the best quadratic norm to approximate a convex set can be traced back to John \citep{John1948}. This amounts to computing the \textit{minimum-volume ellipsoid} enclosing a given set. Efficient procedures for computing such ellipsoid have been proposed in \citep{Khachiyan1996,Anstreicher1999}; see also \citep{Todd2016} for a recent survey. This principle has been applied to linear programming \citep{Nesterov2008rounding}. In our case, the direct applications of such methods would yield a quartic condition number of $n$, where $n$ is the dimension of $E$. In our work, we propose a variant specialized to quartic functions, allowing to compute a quadratic norm for which the condition number of $\rho$ is reduced to $\sqrt{n}$.

\subsubsection{Lewis weights} In the case where the matrices $B_i$ are of rank 1, i.e. $B_i = a_i a_i^T$ for some $a_i \in \reals^n$, then the weights $\tau^*$ satisfying \eqref{eq:FP_intro} are called the \textit{Lewis weights} of the matrix $A= [a_1^T \dots a_m^T]^T \in\reals^{m\times n}$. They were initially introduced in \citep{Lewis1978FiniteDS} and studied further in \citep{Bourgain1989LW,Talagrand1995,Cohen2015lewis}. 
These weights are used as sampling probabilities to approximate the $\ell_p$-norm $\|Ax\|_p$ by selecting a random subset of the rows of $A$. In our work, we show that the same quantities can be used to build an approximation of functions of the type $\|Ax\|_{2p}$ by a Euclidean norm $\|x\|_B = \la Bx, x\ra^{\frac{1}{2}}$.
The algorithm that we use for computing $\tau^*$ is inspired by a fixed point scheme in \citep{Cohen2015lewis}.

\subsection{Notation and definitions} Throughout the paper, $\K$ denotes a convex cone of a Euclidean space $E$. We assume that the norm $\|\cdot\|$ is induced by a self-adjoint positive definite operator $B:E\rightarrow E^*$ as
\[ \|x\| = \|x\|_B = \la Bx, x\ra, \]
for any $x \in E$, where $\la \cdot, \cdot \ra$ is the canonical inner product. We will write $\|\cdot\|_B$ when we want to insist on a specific choice of matrix $B$, and $\|\cdot\|$ when it is clear from context.

A function $Q:E^4\rightarrow \reals$ is a 4-symmetric linear map if it is linear in each variable and invariant with respect to permutation of the variables. We use the short notation $Q[x]^4 = Q[x,x,x,x]$ and $Q[x]^2[y]^2 = Q[x,x,y,y]$. 

The notation $[t]_+ = \max(t,0)$ denotes the positive part of a real number~$t$.
We say that a differentiable function $g:E\rightarrow \reals$ is \textit{uniformly convex} of degree $p\geq 2$ with parameter $\sigma$ if for any $x,y \in E$ we have
\[ g(x) - g(y) - \la \nabla g(y), x-y \ra \geq \frac{\sigma}{p}\|x-y\|^p.\]

\section{Motivation and examples}\label{s:setup}

In this section, we give examples of relevant applications involving minimization of problems of the form of \eqref{eq:p}.

\subsection{DC algorithm for quadratic inverse problems}

The auxiliary problems of the form \eqref{eq:p} arise in the algorithms for \textit{quadratic inverse problems} 
\BEQ\label{eq:noncvx_p}\tag{QIP}
    \min_{x \in \K}\, F(x) \triangleq  \sum_{i=1}^m \left( \la  x, B_i x\ra - d_i \right)^2
\EEQ
where $B_1\dots B_m$ are symmetric linear operators on $E$ and $d_1\dots d_m \in \reals$ some target values. Problem \eqref{eq:noncvx_p} is generally nonconvex and occurs in various tasks of data science and signal processing; we detail some examples below.

Expanding the square, $F$ rewrites as a difference of two functions $\rho,\phi$:
\BEQ\label{eq:dc}
F(x) = \underbrace{\sum_{i=1}^m \la x, B_i x\ra ^2}_{\rho(x)} - \underbrace{\sum_{i=1}^m \left(2 d_i \la x, B_i x\ra - d_i^2 \right)}_{ \phi(x) }.
\EEQ
We make the following assumption:
\begin{center}
\mbox{$\rho$ and $\phi$ are \textbf{convex functions}.} 
\par
\end{center}
This holds for instance if $B_1,\dots B_m$ are positive semidefinite operators and $d_i \geq 0$ for $ i = 1\dots m$.
We say that $F$ has a \textbf{difference-of-convex} (DC) structure. Convexity of $\phi$ allows to design a convex global majorant of $F$ around any point $\xbar$ with the inequality
    \BEQ
            F(x) \leq  \rho(x) - \phi(\xbar) - \la \nabla \phi(\xbar),x-\xbar  \ra.
    \EEQ
    Then the DC algorithm \citep{Tao1986,An2005} simply consists in successive minimization of this global majorant: choose $x_0 \in \K$ and compute for $k = 0,1,\dots$
    \BEQ\label{eq:DC}\tag{DC}
        x_{k+1} \in \argmin_{x \in \K} \rho(x) - \la \nabla \phi(x_k),x \ra.
    \EEQ
    We recognize here a problem of the form \eqref{eq:p}, with $c = \nabla \phi(x_k)$ and $\rho$ is the convex quartic polynomial
    \BEQ\label{eq:def_rho} \rho(x)= \sum_{i=1}^m \la x, B_i x\ra ^2. 
    \EEQ
    The corresponding symmetric 4-linear form is $Q[u,v,w,z] = \sum_{i=1}^m \la u,B_iv \ra \la w,B_iz \ra$.

    The DC algorithm has mostly been studied in the context of polyhedral functions. It is quite general as any function with Lipschitz continuous gradient can be proven to have a DC decomposition. Its efficiency depends on how well we can solve the subproblems defining $x_{k+1}$ in \eqref{eq:DC}. In this work, we show that the case where $\rho$ is a convex quartic polynomial is a favorable situation. 

    We show that, when the operators $B_i$ are positive semidefinite, function $\rho$ satisfies the quartic conditioning assumption \eqref{eq:q_cond}. In particular, the constant $\alpha$ is positive when the operator $\sum_{i=1}^m B_i$ is positive definite. 

        \begin{proposition}\label{prop:cond_number}
          Assume that the operators $B_1,\dots B_m$ are all positive semidefinite and denote $\bar B = \sum_{i=1}^m B_i$. Then the function $\rho$ defined in \eqref{eq:def_rho} satisfies
          \[
          \frac{\lambda_{\rm min}(\bar B)^2}{m} \|x\|_2^4 \leq \rho(x) \leq \lambda_{\rm max}(\bar B)^2 \|x\|_2^4,
          \]
          for $x\in E$, where $\lambda_{\rm min}$ and $\lambda_{\rm max}$ denote the smallest and largest eigenvalues.
        \end{proposition}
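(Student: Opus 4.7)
The plan is to handle the two inequalities independently using only elementary linear algebra: the PSD ordering $0 \preceq B_i \preceq \bar B$ for each $i$ for the upper bound, and the Cauchy--Schwarz inequality applied to the sum of squares for the lower bound.

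For the \textbf{upper bound}, I would first observe that since every $B_i$ is positive semidefinite, so is $\bar B - B_i = \sum_{j \neq i} B_j$, which yields $\langle x, B_i x\rangle \leq \langle x, \bar B x\rangle \leq \lambda_{\max}(\bar B)\|x\|_2^2$ for each $i$. Then I would factor one copy of $\langle x,B_i x\rangle$ out of each squared term and sum:
\[
\rho(x) = \sum_{i=1}^m \langle x, B_i x\rangle \cdot \langle x, B_i x\rangle \;\leq\; \lambda_{\max}(\bar B)\|x\|_2^2 \sum_{i=1}^m \langle x, B_i x\rangle = \lambda_{\max}(\bar B)\|x\|_2^2 \cdot \langle x, \bar B x\rangle \leq \lambda_{\max}(\bar B)^2 \|x\|_2^4.
\]

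For the \textbf{lower bound}, the key observation is that the scalars $a_i := \langle x, B_i x\rangle$ are nonnegative (by PSD-ness of $B_i$), so Cauchy--Schwarz applied to $(a_i)$ and the all-ones vector gives $\sum_{i=1}^m a_i^2 \geq \frac{1}{m}\bigl(\sum_{i=1}^m a_i\bigr)^2$. This rewrites as
\[
\rho(x) \;\geq\; \frac{1}{m}\langle x, \bar B x\rangle^2 \;\geq\; \frac{\lambda_{\min}(\bar B)^2}{m}\|x\|_2^4,
\]
which is exactly the claimed lower bound.

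No step here presents a real obstacle: the proof is essentially two one-liners. The only small subtlety worth stating cleanly is that the nonnegativity of each $\langle x, B_i x\rangle$ (which comes from $B_i \succeq 0$) is what allows Cauchy--Schwarz to yield a useful lower bound, and it is also what justifies pulling the factor $\lambda_{\max}(\bar B)\|x\|_2^2$ out of the sum in the upper bound without a sign concern.
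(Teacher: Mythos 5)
Your proof is correct and uses essentially the same ingredients as the paper: Cauchy--Schwarz for the lower bound, and nonnegativity of each $\langle x, B_i x\rangle$ together with the spectral bound on $\bar B$ for the upper bound. The only cosmetic difference is that you bound each factor $\langle x, B_i x\rangle \leq \lambda_{\max}(\bar B)\|x\|_2^2$ before summing, whereas the paper first notes $\sum_i \langle x, B_i x\rangle^2 \leq \bigl(\sum_i \langle x, B_i x\rangle\bigr)^2$ and then applies the spectral bound once; both arguments are equivalent.
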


        \begin{proof}
          The lower bound follows from Cauchy-Schwarz:
          \[
            \left(\sum_{i=1}^m \la x,B_ix \ra \right)^2 \leq m \sum_{i=1}^m \la x,B_ix \ra^2.
          \]
          For the upper bound, since $\la B_ix,x \ra \geq 0$ for each $i$ we have
          \[
            \sum_{i=1}^m \la x,B_ix \ra^2 \leq \left(\sum_{i=1}^m \la x,B_ix \ra \right)^2.
          \]
          We conclude by using $\lambda_{\rm min}(\bar B) \|x\|_2^2 \leq \sum_{i=1}^m \la x,B_ix \ra \leq \lambda_{\rm max}(\bar B) \|x\|_2^2$.

        \end{proof}

        Therefore, the quartic condition number of $\rho$ is upper bounded by $\sqrt{m} \kappa_2(\bar B)$, where $\kappa_2(\bar B)$ is the condition number in the usual sense of operator $\bar B$. In Section \ref{s:precond}, we refine this estimate using the \textit{coherence} of the set $(B_1,\dots B_m)$.

        We now list a few examples.

        \paragraph{Example 1: phase retrieval} We wish to recover a complex signal $x^* \in \mathbb{C}^N$ from phaseless quadratic measurements $d_i = |\la q_i, x^*\ra|^2$ where $q_i \in \mathbb{C}^N$ for $i = 1 \dots m$. This is a fundamental signal processing problem with applications in optimal imaging, cristallography and astronomy \citep{Shechtman2014phase,Candes2015,Sun2017}. 
        A popular method for performing phase retrieval is to solve the nonconvex optimization problem
        \BEQ\label{eq:phase_ret}
            \min_{x \in \mathbb{C}^N}\, \frac{1}{m} \sum_{i=1}^m (|\la q_i, x\ra|^2  - d_i)^2.
        \EEQ
        The DC structure arises as $\phi(x) =  \frac{1}{m} \sum_{i=1}^m (2 d_i|\la q_i, x\ra|^2-d_i^2)$ is a convex quadratic and the quartic part is
        \begin{align*}
           \rho(x) &= \frac{1}{m} \sum_{i=1}^m |\la q_i,x \ra|^4.
        \end{align*}
        The operator $B_i$ is $q_i q_i^H$ (where $q^H$ denotes Hermitian transpose). Since the vectors $q_1, \dots q_m$ are typically obtained from overcomplete Fourier bases or Gaussian distributions, the sum $\sum_{i=1}^m B_i$ is generally invertible and well-conditioned \citep{Candes2015}.


\newcommand{\centmat}{\mathcal{C}}

\paragraph{Example 2: distance matrix completion (DMC)} In this problem, we wish to recover the position of $N$ points $x_1^*,\dots,x_N^* \in \reals^r$ from the partial knowledge of the pairwise distances $d_{ij} = \|x_i^*-x_j^*\|_2^2$ for $i,j \in \Omega$, where $\Omega \subset \{1\dots N\}\times \{1\dots N\}$. EDMC has applications in sensor network localization and biology; see \cite{Fang2012,Qi2013,Dokmanic2015} and references therein. There are several ways to solve this task, including semidefinite programming; however, for large-scale problems, the following nonconvex formulation is popular due to its smaller dimension:
\BEQ\label{eq:DMC}\tag{DMC}
\min_{
\substack{
  X\in\reals^{N\times r}\\
  X^T \mathbf{1} = 0
}
} \sum_{(i,j) \in \Omega} \left( \|x_i-x_j\|_2^2 -  d_{ij}\right)^2,
\EEQ
where $x_1,\dots x_N$ are the rows of matrix $X$, and the centering constraint allows to remove the translation invariance. The convex quartic function $\rho$ can be written in this case
\[\rho(X) = \sum_{(i,j)\in \Omega} \|x_i-x_j\|_2^4 + \frac{1}{N^2} \big\|\sum_{i=1}^N x_i\big\|^4.
\] 
The addition of the second term, which is 0 on the feasible set of \eqref{eq:DMC}, allows to make $\rho$ positive definite. 
The linear operator $\bar B$ as defined in Proposition \ref{prop:cond_number} satisfies 
 \begin{align*}
  \la\bar B X,X\ra &= \sum_{(i,j \in \Omega)} \|x_i-x_j\|_2^2 +\frac{1}{N} \big\|\sum_{i=1}^N x_i\big\|^2\\
  &=\sum_{k=1}^r \la \left(\mathcal{L}(\Omega) + \frac{1}{N} \mathbf{1}_N\mathbf{1}_N^T\right) x_{\cdot k},x_{\cdot k} \ra,
\end{align*}
where $x_{\cdot k}$ is the $k$-th column of $X$ and $\mathcal{L}(\Omega) \in \reals^{N\times N}$ is the Laplacian matrix of the graph induced by $\Omega$ \cite{chung1997spectral}. 

The smallest eigenvalue of $\mathcal{L}(\Omega)$ is $0$ with eigenvector $\mathbf{1}_N$. The second smallest eigenvalue, which we denote $\lambda_1(\Omega)$, is often called the \textbf{graph spectral gap}. The largest eigenvalue is bounded by $4\bar d(\Omega)$, where $\bar d(\Omega)$ is the maximal degree of the graph. We deduce that the eigenvalues of operator $\bar B$ are located in $[\lambda_1(\Omega), 4\bar d(\Omega) + 1]$. It follows from Proposition \ref{prop:cond_number} that for every $X \in \reals^{N \times r}$,
\[
    \left(\frac{\lambda_1(\Omega)^2}{|\Omega|+1}\right) \|X\|_F^4 \leq \rho(X) \leq (4\bar d(\Omega)+1)^2 \|X\|_F^4.
\]
The spectral gap $\lambda_1(\Omega)$ is nonzero if the graph is connected \cite{chung1997spectral}, which is a necessary condition for Problem \eqref{eq:DMC} to be solvable.

\paragraph{Example 3: symmetric nonnegative matrix factorization (SymNMF).}
Let $M \in \reals^{n\times n}_+$ be a nonnegative matrix, which we assume to be {positive semidefinite}. We wish to find a symmetric low-rank decomposition $M \approx XX^T$ where the factor $X\in \reals^{n\times r}_+$ has nonnegative entries and $r$ is a small target rank. This task has relevant applications in graph clustering \citep{He2011,Kuang2015}. SymNMF is usually performed by solving the nonconvex problem
\BEQ\label{eq:symnmf}\tag{SymNMF} \min_{X \in \reals^{n \times r}_+} \|XX^T - M\|_{ F}^2 = \|XX^T\|_F^2 - \left( 2\la MX, X \ra - \|M\|_F^2\right)\EEQ
where $\|\cdot\|_F$ denotes the Frobenius norm. Then $\mathcal{K} = \reals^{n \times r}_+$ is the nonnegative orthant, the function ${\phi(X) = 2 \la MX, X\ra -\|M\|_F^2}$ is a convex quadratic (since $M$ is positive semidefinite) and the quartic part is the convex function $\rho(X) = \|XX^T\|_F^2$. In this situation, Proposition \ref{prop:cond_number} does not apply as the individual operators $B_i$ are not positive semidefinite. However, we can still estimate the quartic condition number. Using the properties of the Frobenius inner product we have
\[ \rho(X) = \text{Tr}(XX^TXX^T) = \text{Tr}(X^TXX^TX) = \|X^T X\|_F^2 = \sum_{i,j=1}^r (X_{\cdot,i}^T X_{\cdot,j})^2,\]
where $X_{.,i}$ is the $i$-th column of $X$. Hence $\rho$ satisfies the bounds
\begin{align*}
    \rho(X) &\leq \sum_{i,j=1}^r \|X_{\cdot,i}\|_F^2 \|X_{\cdot,j}\|_F^2 = \|X\|_F^4,\\
    \rho(X) &\geq \sum_{i=1}^r \|X_{\cdot,i}\|_F^4 \geq \frac{1}{r} \left(\sum_{i=1}^r \|X_{\cdot,i}\|_F^2\right)^2 = \frac{1}{r}\|X\|_F^4.
\end{align*}
The condition number is hence $\sqrt{r}$, which is small in most applications.

\subsection{Statistical Bregman preconditioning for stochastic and distributed optimization}

We consider the same setup as in the previous section, that is, a quadratic inverse problem of the form \eqref{eq:noncvx_p} with a difference-of-convex structure. We mention another technique which is especially suited for huge-scale and potentially distributed problems.

Assume that the measurement operators $B_1\dots B_m$ are positive semidefinite and generated as i.i.d. samples from a certain statistical distribution. Then, one can subsample this distribution by choosing a smaller index set $I \subset \{1\dots m\}$ and form an approximation $\tilde{\rho}$ to $\rho$ as
\[ \tilde{\rho}(x) = \frac{m}{|I|}\, \sum_{i \in I} \la x, B_i x\ra^2. \]
With the right statistical assumptions, one can prove that when $|I|$ is sufficiently high, \textit{relative smoothness and strong convexity} hold with high probability:
\BEQ\label{eq:spag} \mu_m \nabla^2\tilde{\rho}(x) \preceq \nabla^2 \rho(x) \preceq L_m  \nabla^2 \tilde{\rho}(x), 
\EEQ
where $\mu_m,L_m$ are the relative regularity constants. We refer the reader to \citep{shamir2014communication,Hendrikx2020} for detailed results on statistical preconditioning. Condition \eqref{eq:spag} allows to apply the Bregman (stochastic) gradient method with reference function $\tilde{\rho}$:
\BEQ\label{eq:breg_spag}
    x_{k+1} = \argmin_{x \in \K} \, \la g_k,x-x_k \ra + \frac{1}{\lambda_k} \left( \tilde{\rho}(x) - \tilde{\rho}(x_k) - \la \nabla \tilde{\rho}(x_k),x-x_k\ra \right),
\EEQ
where $g_k$ is a stochastic unbiased estimate of $\nabla F(x_k)$, and $\lambda_k >0$ is the step size. 

The choice of $I$ involves a tradeoff: if $I=\{1\dots m\}$ the problem \eqref{eq:breg_spag} can be as hard to solve as the original one \eqref{eq:noncvx_p}. However, choosing $I$ as a smaller subset can result in a significant performance gain compared to a standard Euclidean method. This approach can be advantageous in the context of distributed optimization, where the computation \eqref{eq:breg_spag} is performed by a centralized server and the goal is to reduce the overall number of stochastic gradients computed as they involve costly communication among machines \citep{Hendrikx2020}.

Problem \eqref{eq:breg_spag} is of the form \eqref{eq:p} with $c = \nabla \tilde{\rho}(x_k) - \lambda_k g_k$, which is another motivation for our work.

\section{Properties of convex quartic polynomials}\label{s:quartic_prop}

Before studying methods for solving Problem \eqref{eq:p}, we need to establish some key preliminary results on convex quartic polynomials. Recall that we assume that $\rho$ induced by the symmetric 4-linear form $Q:E^4\rightarrow \reals$ as $\rho(x)=Q[x]^4$, and that $\rho$ is convex.

\subsection{Convexity and smoothness of $\sqrt{\rho}$}    
We first recall the following fact about convex quartic forms.
    \begin{lemma}[Theorem 1 in \citep{Nesterov2022}]\label{lemma:quartic_cauchy} 
    Let $Q:E^4\rightarrow \reals$ be a symmetric 4-linear form for which the function $x\mapsto Q[x]^4$ is convex. Then for every $x,y \in E$ we have
        \begin{align} 
          0 &\leq Q[x]^2[y]^2,\\
        \label{eq:ineq_1}\left(Q[x]^2[y]^2\right)^2 &\leq Q[x]^4\, Q[y]^4, \\
        \label{eq:ineq_2} \left(Q[x]^3[y]\right)^2 &\leq Q[x]^4 Q[x]^2[y]^2.
        \end{align}
    \end{lemma}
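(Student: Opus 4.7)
The plan is to prove the three inequalities in the order listed, using (1) as a tool for both (3) and (2). The first two are fairly direct; inequality (2) is the main obstacle.

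\emph{Inequality (1).} Convexity of $\rho(x) = Q[x]^4$ forces $\nabla^2 \rho(x) \succeq 0$ at every point, and a direct computation using the chain rule and the full symmetry of $Q$ gives $\langle \nabla^2 \rho(x) y, y\rangle = 12\, Q[x]^2[y]^2$. Hence $Q[x]^2[y]^2 \geq 0$ for all $x, y \in E$.

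\emph{Inequality (3).} Fix $x \in E$ and introduce the symmetric bilinear form $A_x$ on $E$ defined by $A_x(u, v) = Q[u, v, x, x]$. By (1) applied with $v = u$, $A_x(u, u) = Q[x]^2[u]^2 \geq 0$, so $A_x$ is positive semidefinite. The Cauchy--Schwarz inequality for PSD bilinear forms, applied with $u = x$ and $v = y$, gives $A_x(x, y)^2 \leq A_x(x, x)\, A_x(y, y)$, which is exactly $(Q[x]^3[y])^2 \leq Q[x]^4 \cdot Q[x]^2[y]^2$.

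\emph{Inequality (2).} This is the main obstacle: Cauchy--Schwarz on $A_x$ or on the analogous form $A_y(u,v) = Q[u,v,y,y]$ only recovers (3) or its symmetric counterpart, and never pairs $Q[x]^4$ with $Q[y]^4$. The trick I would use is to apply (1) itself to the perturbed vectors $u = x + ty$ and $v = x - ty$, where $t \in \reals$ is a free parameter. Under $t \mapsto -t$ the roles of $u$ and $v$ swap, and since $Q[u,u,v,v]$ is symmetric under $u \leftrightarrow v$, the expansion in $t$ must be an even polynomial, forcing the ``odd'' quantities $Q[x]^3[y]$ and $Q[x][y]^3$ to cancel. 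A short multilinear expansion then yields
\[ Q[x + ty, x + ty, x - ty, x - ty] = Q[x]^4 - 2 t^2\, Q[x]^2[y]^2 + t^4\, Q[y]^4. \]
By (1), this quantity is nonnegative for every real $t$. Viewed as a quadratic polynomial in $s = t^2 \in [0, \infty)$ with nonnegative leading coefficient $Q[y]^4$, its minimum over $s \geq 0$ is attained at $s^\star = Q[x]^2[y]^2/Q[y]^4$ (nonneg by (1)) whenever $Q[y]^4 > 0$, and nonnegativity of the polynomial at $s^\star$ rearranges to $Q[x]^4 \cdot Q[y]^4 \geq (Q[x]^2[y]^2)^2$. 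The degenerate case $Q[y]^4 = 0$ forces $Q[x]^2[y]^2 = 0$ as well (the polynomial would otherwise tend to $-\infty$ as $t \to \infty$), so (2) holds trivially there.
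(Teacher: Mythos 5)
The paper does not supply its own proof of this lemma: it is quoted verbatim as Theorem~1 of \citet{Nesterov2022}, so there is no in-text argument to compare against. Taken on its own terms, your proof is correct and self-contained. The Hessian computation $\langle \nabla^2\rho(x) y,y\rangle = 12\,Q[x]^2[y]^2$ is right and yields the first inequality from convexity. For \eqref{eq:ineq_2}, recognizing $A_x(u,v)=Q[u,v,x,x]$ as a PSD symmetric bilinear form (by the first inequality) and applying Cauchy--Schwarz is exactly the right move. For \eqref{eq:ineq_1}, the substitution $u=x+ty$, $v=x-ty$ is the key idea; I verified the multilinear expansion and the odd-degree terms in $t$ do cancel, giving $Q[x]^4 - 2t^2\,Q[x]^2[y]^2 + t^4\,Q[y]^4 \geq 0$, and your case analysis (including the degenerate case $Q[y]^4=0$, where you correctly observe that $Q[x]^2[y]^2>0$ would force the polynomial to $-\infty$) is airtight. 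One nice feature of your argument is that it reduces \emph{all three} inequalities to the single nonnegativity statement $Q[x]^2[y]^2\geq 0$, which is itself just the second-order characterization of convexity; this keeps the whole lemma elementary and avoids any appeal to spectral decompositions or tensor eigenvalue machinery.
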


This inequality implies that $\sqrt{\rho}$ has favorable properties for gradient methods.
    \begin{proposition}\label{prop:quartic_form_hessian}
        The function $g=\sqrt{\rho}$ is convex, differentiable and has Lipschitz continuous gradients with constant $6 {\beta}$.
    \end{proposition}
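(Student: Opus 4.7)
The plan is to compute the Hessian of $g=\sqrt{\rho}$ explicitly on $E\setminus\{0\}$, extract its positivity and uniform upper bound directly from the Cauchy--Schwarz-type inequalities of Lemma~\ref{lemma:quartic_cauchy}, and finally extend these properties to the origin by a continuity argument. Since $\rho$ is a polynomial that is strictly positive away from $0$ (by the lower bound $\alpha^2\|x\|^4 \leq \rho(x)$), the function $g$ is $C^\infty$ on $E\setminus\{0\}$, and the upper bound $\rho(x) \leq \beta^2\|x\|^4$ shows $0 \leq g(x) \leq \beta\|x\|^2$, so $g$ is differentiable at $0$ with $\nabla g(0)=0$.

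Using $\nabla \rho(x)[h] = 4Q[x]^3[h]$ and $\nabla^2 \rho(x)[h,h] = 12 Q[x]^2[h]^2$, a direct computation from $g = \rho^{1/2}$ gives, for $x \neq 0$,
\[
\langle \nabla^2 g(x)h, h\rangle \;=\; \frac{6\,Q[x]^2[h]^2}{\sqrt{\rho(x)}} \;-\; \frac{4\,\bigl(Q[x]^3[h]\bigr)^2}{\rho(x)^{3/2}}.
\]
For convexity, I would invoke \eqref{eq:ineq_2}, namely $(Q[x]^3[h])^2 \leq Q[x]^4\,Q[x]^2[h]^2 = \rho(x)\,Q[x]^2[h]^2$, which substituted above yields
\[
\langle \nabla^2 g(x)h, h\rangle \;\geq\; \frac{6\,Q[x]^2[h]^2}{\sqrt{\rho(x)}} - \frac{4\,Q[x]^2[h]^2}{\sqrt{\rho(x)}} \;=\; \frac{2\,Q[x]^2[h]^2}{\sqrt{\rho(x)}} \;\geq\; 0,
\]
where the final nonnegativity is the first assertion of Lemma~\ref{lemma:quartic_cauchy}. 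For the smoothness bound, I would discard the (nonnegative) subtracted term and apply \eqref{eq:ineq_1}: $Q[x]^2[h]^2 \leq \sqrt{\rho(x)\,\rho(h)}$. Combined with the hypothesis $\rho(h) \leq \beta^2\|h\|^4$, this gives
\[
\langle \nabla^2 g(x)h, h\rangle \;\leq\; \frac{6\,\sqrt{\rho(x)\rho(h)}}{\sqrt{\rho(x)}} \;=\; 6\sqrt{\rho(h)} \;\leq\; 6\beta\|h\|^2,
\]
so $\nabla^2 g(x) \preceq 6\beta\,\idm$ on $E\setminus\{0\}$ with respect to $\|\cdot\|_B$.

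The remaining step, which I expect to be the only delicate point, is extending these two inequalities to the origin. The Hessian formula is singular at $0$, but $\nabla g$ is continuous on all of $E$ (by the quadratic control $g(x) \leq \beta\|x\|^2$ forcing $\nabla g(x) \to 0 = \nabla g(0)$). Convexity of $g$ on $E$ then follows by taking limits in the midpoint inequality applied to perturbations of $0$, and the Lipschitz bound $\|\nabla g(x)-\nabla g(y)\|_* \leq 6\beta\|x-y\|$ follows by integrating the Hessian bound along any segment $[x,y]$ that avoids $0$ and passing to the limit using continuity of $\nabla g$ when $0 \in [x,y]$. This establishes the proposition with constant $6\beta$.
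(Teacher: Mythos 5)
Your computation of the Hessian and the use of Lemma~\ref{lemma:quartic_cauchy} to obtain $0 \preceq \nabla^2 g(x) \preceq 6\beta B$ on $E\setminus\{0\}$ is exactly the paper's argument, and the plan of integrating along segments avoiding the origin and passing to the limit is also the paper's route. The one genuine gap is the justification you give for continuity of $\nabla g$ at $0$. You write that the quadratic control $g(x) \leq \beta\|x\|^2$ "forces $\nabla g(x) \to 0$", but that implication is false for general differentiable functions: a one-dimensional example such as $g(t) = t^2\sin(1/t)$ satisfies $|g(t)| \leq t^2$ and $g'(0)=0$, yet $g'(t)$ does not tend to $0$. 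The quadratic bound on $g$ alone only gives $\nabla g(0)=0$, not $\lim_{x\to 0}\nabla g(x)=0$; you need an independent bound on the gradient near the origin. The paper closes this gap by working directly from the first-derivative formula
\[
\la \nabla g(x),h \ra = \frac{2\,Q[x]^3[h]}{(Q[x]^4)^{1/2}},
\]
and chaining \eqref{eq:ineq_2}, \eqref{eq:ineq_1} and \eqref{eq:q_cond} to get
\[
|\la \nabla g(x),h\ra| \leq 2\bigl(Q[x]^4\, Q[h]^4\bigr)^{1/4} \leq 2\beta\|x\|\,\|h\|,
\]
hence $\|\nabla g(x)\| \leq 2\beta\|x\| \to 0$ as $x\to 0$. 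With that explicit gradient bound in place, your remaining limit arguments (convexity via continuity of $g$, and the Lipschitz estimate for segments through $0$, for which the paper simply uses the triangle inequality $\|\nabla g(x)\|+\|\nabla g(y)\| \leq 6\beta(\|x\|+\|y\|) = 6\beta\|x-y\|$ when $0\in[x,y]$) go through. So: same approach, but replace the unsupported "quadratic control forces $\nabla g\to 0$" claim by the direct estimate on $\nabla g$ from \eqref{eq:nablag} and Lemma~\ref{lemma:quartic_cauchy}.
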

    \begin{proof}
        For $x \neq 0$, $\rho(x) > 0$ by \eqref{eq:q_cond} and therefore $g$ is twice differentiable at $x$. Its derivative along a direction $h\in E$ is
        \BEQ\label{eq:nablag}
          \la \nabla g(x),h \ra = \frac{2 Q[x]^3[h] }{(Q[x]^4)^{1/2}}.
        \EEQ
        Its second derivative writes for $h \in E$
        \[
             g''(x)[h,h] = \frac{6\,Q[x]^2[h]^2}{\left(Q[x]^4\right)^{1/2}} - \frac{4\left(Q[x]^3[h]\right)^{2}}{\left(Q[x]^4\right)^{3/2}}.
        \]
        Equation \eqref{eq:ineq_2} implies that
        $g''(x)[h,h] \geq \frac{2\,Q[x]^2[h]^2}{\left(Q[x]^4\right)^{1/2}} \geq 0, $ 
        which proves convexity on $E \setminus \{0\}$. Since $g$ is continuous at $0$, convexity on $E$ follows.
        
        Let us now prove Lipschitz continuity of gradients. Using Equation \eqref{eq:ineq_1} and the bound \eqref{eq:q_cond} yields that for $x \neq 0$,
        \begin{align*}
            g''(x)[h,h] & \leq \frac{6\,Q[x]^2[h]^2}{\left(Q[x]^4\right)^{1/2}} 
            \leq 6 \left(Q[h]^4\right)^{1/2} 
            \leq 6 {\beta} \|h\|^2.
        \end{align*}
   This proves that $\|\nabla g(x) - \nabla g(y)\| \leq 6 \beta \|x-y\|$ for every $x,y \in E$ such that $0$ does not belong to the segment $[x,y]$.
   
   Consider now the situation at $0$. Since $|g(0+h)| = |\sqrt{\rho(h)}|\leq \beta \|h\|^2$, $g$ is differentiable at 0 and $\nabla g(0) = 0$. Let us prove that $\nabla g$ is continuous at $0$. Using the expression \eqref{eq:nablag} and Lemma \ref{lemma:quartic_cauchy}, we have for $x \neq 0$ and $h \in E$
   \begin{align*}
      |\la \nabla g(x),h \ra| \leq 2
     \left(Q[x]^4 Q[h]^4\right)^{1/4} \leq 2\beta \|x\| \|h\|
   \end{align*}
   and therefore $\|\nabla g(x)\| \leq 2\beta \|x\|$, implying that $\nabla g$ is continuous at 0. Hence, by continuity and the previous point we have $\|\nabla g(x) - \nabla g(0)\| \leq 6\beta\|x\|$ for every $x \in E$. It remains to deal with the situation where $0 \in [x,y]$. In this case we have
   $$
\| \nabla g(x) - \nabla g(y) \| \leq \| \nabla g(x) \| + \| \nabla g(y) \| \leq 6\beta (\| x \| + \| y \|) = 6\beta \| x-y \|.
$$
\end{proof}

\subsection{Uniform convexity of $\rho$}

%
    
We show that $\rho$ satisfies the \textit{uniform convexity} property of degree 4.

    \begin{proposition}\label{prop:quartic_lower_bound}
        The function $\rho$ is uniformly convex of degree 4 with constant $4\alpha^2/3$: for every $x,y \in E$,
        \[ \rho(x) - \rho(y) - \la \rho'(y), x-y \ra  \geq \frac{\alpha^2}{3} \|x-y\|^4.
        \]
    \end{proposition}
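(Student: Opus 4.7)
The plan is to expand $\rho(x)$ around $y$ using the 4-linear structure of $Q$, bound the cubic cross-term with Lemma \ref{lemma:quartic_cauchy}, and observe that the resulting quadratic form has discriminant zero, yielding a clean perfect-square bound.

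Concretely, writing $h = x-y$ and using symmetry of $Q$, the binomial expansion gives
\[
\rho(x) - \rho(y) - \la \nabla\rho(y), x-y \ra = 6\,Q[y]^2[h]^2 + 4\,Q[y][h]^3 + Q[h]^4.
\]
Denote $A = Q[y]^2[h]^2$, $B = Q[h]^4$, $C = Q[y][h]^3$. By Lemma \ref{lemma:quartic_cauchy}, we have $A \geq 0$ and, applying inequality \eqref{eq:ineq_2} with the roles of $x$ and $y$ swapped, $C^2 \leq A\,B$. The condition \eqref{eq:q_cond} gives $B \geq \alpha^2 \|h\|^4$.

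To reach the target bound $\tfrac{\alpha^2}{3}\|h\|^4$, it suffices to show $6A + 4C + B \geq \tfrac{B}{3}$, i.e.,
\[
6A + 4C + \tfrac{2}{3}B \;\geq\; 0.
\]
Using $|C| \leq \sqrt{AB}$ and setting $u=\sqrt{A}$, $v=\sqrt{B}$, this reduces to checking that the quadratic form $6u^2 - 4uv + \tfrac{2}{3}v^2$ is nonnegative. The discriminant is $16 - 4\cdot 6 \cdot \tfrac{2}{3} = 0$, so the form is a perfect square: $6u^2 - 4uv + \tfrac{2}{3}v^2 = \bigl(\sqrt{6}\,u - \sqrt{2/3}\,v\bigr)^2 \geq 0$. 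Combining with $B \geq \alpha^2 \|h\|^4$ completes the proof.

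The main step requiring care is choosing how much of $B$ to keep as the leading term versus how much to absorb into the square with the cross term $C$; the factor $\tfrac{1}{3}$ in the final constant is precisely what makes the discriminant vanish and the bound tight within this approach. Nothing else is delicate: convexity and differentiability of $\rho$ justify the Taylor expansion globally, and the inequality $C^2 \leq AB$ from Lemma \ref{lemma:quartic_cauchy} is the only nontrivial analytic input.
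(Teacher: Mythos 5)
Your proposal is correct and follows essentially the same approach as the paper: both expand the Bregman residual as $Q[h]^4 + 6Q[y]^2[h]^2 + 4Q[y][h]^3$, peel off $\tfrac13 Q[h]^4 \geq \tfrac{\alpha^2}{3}\|h\|^4$, and show the remainder $\tfrac23 Q[h]^4 + 6Q[y]^2[h]^2 + 4Q[y][h]^3$ is nonnegative via the Cauchy--Schwarz-type inequality from Lemma~\ref{lemma:quartic_cauchy}. The only cosmetic difference is that you recognize the remainder as a perfect square $\bigl(\sqrt{6}\,u - \sqrt{2/3}\,v\bigr)^2$ while the paper invokes $a^2+b^2\geq 2ab$; these are the same computation.
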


    \begin{proof}
      Let $h = x-y$. By expanding $Q[y+h]^4$ we get
      \begin{align*}
        \rho(y+h) - \rho(y) - \la \rho'(y),h \ra &= Q[y+h]^4 - Q[y]^4 - 4 Q[y]^3[h]\\
        &=Q[h]^4+ 6Q[y]^2[h]^2 + 4Q[y][h]^3 \\
        &\geq \frac{\alpha^2}{3}\|h\|^4+\frac{2}{3}Q[h]^4+ 6Q[y]^2[h]^2 + 4Q[y][h]^3
      \end{align*}
      We show that the residual is nonnegative using the inequality $a^2 + b^2 \geq 2ab$ and Lemma \ref{lemma:quartic_cauchy}:
      \begin{align*}
      \frac{2}{3}Q[h]^4+ 6Q[y]^2[h]^2 + 4Q[y][h]^3  &\geq 2 \sqrt{4 Q[h]^4 Q[y]^2[h]^2} - |4Q[y][h]^3|\geq 0.
      \end{align*}
    \end{proof}

\section{Homogenized gradient descent}\label{s:hom}
We now design gradient methods for solving \eqref{eq:p}. Using the results from the previous sections, we first show that it can be transformed into a more favorable \textit{homogenized} problem, which satisfies the classical assumptions for smooth convex minimization.

From now on, we make an assumption ensuring that problem \eqref{eq:p} is not trivial.

\begin{assumption}\label{ass:polar}
  There exists $x \in \mathcal{K}$ such that $\la c,x \ra > 0$.
\end{assumption}

In other words, $c$ does not belong to the polar cone $\mathcal{K}^\circ$. If this was the case, the problem would minimized at 0.

\subsection{Homogenization}

\newcommand{\fhom}{f_{\rm hom}}
Recall that $f(x) = \rho(x) - \la c,x \ra$.
Let $\fhom$ the \emph{homogenization} of $f$ be defined for $y \in E$ as
\BEQ
    \fhom(y) = \min_{s \in \reals_+} f(s y)
\EEQ
 As $f(s y) = s^4 \rho(y) - s \la c, y\ra$, the one-dimensional minimization in $s$ can be computed easily. Writing $s(y)$ the solution to this minization, we have for $y\neq 0$
\BEQ\label{eq:def_tau}
\begin{split}
    &s(y) = \argmin_{s \geq 0} f(s y) = \left(\frac{[\la c, y\ra]_+}{4\rho\left(y\right)}\right)^{1/3},\\
    &\fhom(y) = f(s(y)y) = \frac{-3}{4^{4/3}} \left( \frac{[\la c,y \ra]_+^4}{\rho(y)} \right)^{1/3}.
\end{split}
\EEQ
Since Problem \eqref{eq:p} is defined on a cone $\K$, we have
\BEQ\label{eq:minimhom}
\min_{x \in \K}f(x) = \min_{y \in \K} \min_{s \geq 0} f(s y) = \min_{y \in \K} \fhom(y),
\EEQ
and the corresponding minimizers are
\BEQ\label{eq:minimizer_phom}
    \argmin_{x \in \K} f(x) = \Big\{ s(y) y \,:\, y \in \big\{\argmin_{y \in \K} \fhom(y) \big\}\Big\}.
\EEQ
Since $f_{\rm hom}(y) = 0$ if $\la c,y \ra \leq 0$, we can restrict the minimization to the set ${\{y \in \mathcal{K}\,:\, \la c,y \ra > 0\}}$, which is nonempty by Assumption \ref{ass:polar}. Furthermore, note that $f_{\rm hom}$ is a homogeneous function of degree 0. Since it is independent to rescaling, we can restrict to the set where $\la c,y \ra = 1$ and write
\BEQ
    \min_{y \in \K} \fhom(y) = \min_{y \in \K} \frac{-3}{4^{4/3}} \left( \frac{\la c,y \ra^4}{\rho(y)} \right)^{1/3}= \min_{\substack{y \in \K \\ \la c, y \ra = 1}}\, \frac{-3}{4^{4/3}\rho(y)^{1/3}} 
\EEQ
Note that the minimizer satisfies $\rho(y^*) > 0$.
Since any increasing transformation of the objective function preserves the minimizer, this problem has the same solution as
\BEQ\label{eq:p_hom}\tag{H}
    \min_{\substack{y \in \K\\ \la c, y \ra = 1}} \sqrt{\rho(y)}.
\EEQ
We summarize this discussion in the following result, which relates the quality of a solution to \eqref{eq:p_hom} to that of the original problem \eqref{eq:p} in \textit{relative accuracy}.

\begin{proposition}[Homogenization]\label{prop:hom}
    Let $y^*$ be a minimizer of \eqref{eq:p_hom}. Then
    \begin{enumerate}[label=(\roman*)]
        \item $x^* = s(y^*) y^*$ is a minimizer of \eqref{eq:p},
        \item for any $y \in \K$ such that $\la c, y \ra =1$, the point $x = s(y) y$ satisfies
        \BEQ
        \frac{f(x) - f(x^*)}{|f(x^*)|} \leq \frac{2}{3}\frac{ \sqrt{\rho(y)} - \sqrt{\rho(y^*)} }{\sqrt{\rho(y^*)}}
        \EEQ
    \end{enumerate}
\end{proposition}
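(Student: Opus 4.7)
Part (i) is essentially a re-reading of the chain of equalities \eqref{eq:minimhom} and \eqref{eq:minimizer_phom} established just before the statement. The plan is to observe that $\fhom$ is identically zero on $\{\langle c,y\rangle \leq 0\}$ and homogeneous of degree zero, so minimizing it over $\K$ is equivalent to minimizing it over the slice $\{y\in\K : \langle c,y\rangle = 1\}$, where the decreasing transformation $t\mapsto -\frac{3}{4^{4/3}}t^{-1/3}$ (applied to $t=\rho(y)$) has the same argmin as $\sqrt{\rho(y)}$. Thus $y^*$ minimizes \eqref{eq:p_hom} iff it minimizes $\fhom$ on that slice, and \eqref{eq:minimizer_phom} then gives $x^* = s(y^*)y^*$ as a minimizer of \eqref{eq:p}. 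The scalar $s(y^*)$ is well defined because $\langle c,y^*\rangle = 1$ forces $y^*\neq 0$, so $\rho(y^*) \geq \alpha^2\|y^*\|^4 > 0$ by \eqref{eq:q_cond}.

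For part (ii), the plan is to reduce the bound to a one-variable scalar inequality. For any feasible $y$, the formula \eqref{eq:def_tau} specialized to $\langle c,y\rangle = 1$ gives
\[
f(s(y)y) = \fhom(y) = -\frac{3}{4^{4/3}\,\rho(y)^{1/3}},
\]
and the same at $y^*$. Both values are negative, so $|f(x^*)| = -f(x^*)$. Setting $r = \rho(y^*)/\rho(y)$, optimality of $y^*$ in \eqref{eq:p_hom} gives $r \in (0,1]$, and a direct computation yields
\[
\frac{f(x) - f(x^*)}{|f(x^*)|} = 1 - r^{1/3}.
\]
The right-hand side of the claimed bound rewrites as $\tfrac{2}{3}\bigl(r^{-1/2} - 1\bigr)$. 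The proposition therefore reduces to the scalar inequality
\[
1 - r^{1/3} \leq \tfrac{2}{3}\bigl(r^{-1/2} - 1\bigr), \qquad r \in (0,1].
\]

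The main obstacle, such as it is, is this elementary one-variable inequality. The clean way to handle it is to substitute $s = r^{1/6}\in(0,1]$ and multiply by $3s^3$, turning the inequality into $3s^5 - 5s^3 + 2 \geq 0$. One expects a double zero at $s=1$ (the tight case $\rho(y)=\rho(y^*)$), and indeed
\[
3s^5 - 5s^3 + 2 = (s-1)^2\bigl(3s^3 + 6s^2 + 4s + 2\bigr).
\]
The cubic factor has nonnegative coefficients and is strictly positive on $[0,1]$, so the whole expression is nonnegative on $(0,1]$, with equality exactly at $s=1$. Combined with the identity above, this proves the claimed relative-accuracy estimate. No step beyond this algebraic factorization is conceptually delicate; the main point of the proposition is really the reduction of \eqref{eq:p} to the compact, smooth homogenized problem \eqref{eq:p_hom}, with part (ii) quantifying the price paid for working in relative accuracy.
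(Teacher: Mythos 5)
Your proof is correct, and part (ii) takes a genuinely different route from the paper's. The paper bounds $f_{\rm hom}(y) - f_{\rm hom}(y^*)$ by applying the tangent-line inequality for the concave function $u \mapsto -u^{-2/3}$ at $u = \sqrt{\rho(y^*)}$; this gives the factor $\frac{2}{4^{4/3}\rho(y^*)^{5/6}}$ in one step, which is then rewritten as $\frac{2}{3\sqrt{\rho(y^*)}}|f_{\rm hom}(y^*)|$. You instead normalize the quantities by the ratio $r = \rho(y^*)/\rho(y) \in (0,1]$, compute the exact expression $\frac{f(x)-f(x^*)}{|f(x^*)|} = 1 - r^{1/3}$, and reduce the claim to the one-variable inequality $1 - r^{1/3} \leq \frac{2}{3}(r^{-1/2}-1)$, which you settle by substituting $s = r^{1/6}$ and factoring $3s^5 - 5s^3 + 2 = (s-1)^2(3s^3 + 6s^2 + 4s + 2)$. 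Your version is more elementary (no appeal to concavity) and exposes the exact tightness at $r=1$ together with the double root structure of the bound; the paper's concavity argument is slightly slicker and more obviously generalizable if one later changes the exponent in the homogenization. Both arguments are valid, and part (i) is handled the same way in both.
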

\begin{proof} 
If $y^*$ is  minimizer of \eqref{eq:p_hom}, then it is also a minimizer of $\fhom$ on $\K$, and by \eqref{eq:minimizer_phom} the point $x^* = s(y^*)y^*$ is a minimizer of \eqref{eq:p}. Then, for $y \in \K$ such that $\la c, y\ra = 1$ we have
    \BEQ
 \begin{array}{rl}
        f(s(y) y) - f(s(y^*) y^*) = & \fhom(y) - \fhom(y^*)
        =\frac{3}{4^{4/3}} \left( \frac{-1}{\rho(y)^{1/3}} - \frac{-1}{\rho(y^*)^{1/3}} \right)\\
        \leq & \frac{2}{4^{4/3} \rho(y^*)^{5/6} } \left( \sqrt{\rho(y)} - \sqrt{\rho(y^*)} \right)
    \end{array}
 \EEQ
    where we used concavity of the function $u \mapsto -u^{-2/3}$. Then, we notice that
    \[ \frac{2}{4^{4/3} \rho(y^*)^{5/6} } = \frac{2}{3 \sqrt{\rho(y^*)} }\cdot \frac{3}{ 4^{4/3} \rho(y^*)^{1/3} } = \frac{2}{3 
    \sqrt{\rho(y^*)} } |\fhom(y^*)| \]
    which yields the result as $\fhom(y^*) = f(x^*)$.
\end{proof}

Proposition \ref{prop:hom} states that an approximative solution $x$ of \eqref{eq:p} can be obtained as $s(y)y$, where $y$ is an approximate solution of the \emph{homogenized} problem \eqref{eq:p_hom}. We chose to minimize the square root of $\rho$ in \eqref{eq:p_hom} since, as we showed in Section~\ref{s:quartic_prop}, it has favorable properties for gradient methods. Let us now apply such scheme to \eqref{eq:p_hom}.

\subsection{Fast sublinear rates for squared uniform convexity}\label{ss:convrates}
Motivated by Problem \eqref{eq:p_hom}, let us study the gradient methods for solving problems of the form
\BEQ\label{eq:p_hom_gen}
    \min_{y \in \C} g(y),
\EEQ
where, throughout this section, we assume the following:
\begin{itemize}
    \item $\C$ is a closed convex set of some Euclidean space $E$,
    \item $g : E \mapsto \reals$ is a convex function with $L$-Lipschitz continuous gradients and 
    $g >0$ on $\C$,
    \item $g^2$ satisfies the following uniform convexity property of degree 4:
    \BEQ\label{eq:unif_conv}
        g^2(x) - g^2(y) -\la \nabla (g^2)(y),x-y \ra \geq \frac{\mu^2}{4} \|x-y\|^4 \quad \forall x,y \in E.
    \EEQ    
\end{itemize}

Note that the assumptions on $g$ imply that $\mu 
\leq L$. Indeed, by Lipschitz gradient continuity,
\[
  g^2 (x) \leq \left[ g(y) + \la \nabla g(y),x-y \ra + \frac{L}{2}\|x-y\|^2 \right]^2, \quad \forall x,y \in E.
\]
Take now $x\rightarrow \infty$ with $y$ fixed. By comparing the dominant quartic term with that of the lower bound \eqref{eq:unif_conv}, we deduce that $\mu \leq L$.

In our problem of interest \eqref{eq:p_hom}, we have $\C  = \K \cap \{ y \,:\, \la y,c \ra =1 \}$, the function $g = \sqrt{\rho}$ is convex and smooth with constant $L = 6{\beta}$ (Proposition \ref{prop:quartic_form_hessian}), and $g^2 = \rho$ satisfies \eqref{eq:unif_conv} with $\mu = \sqrt{{4}/{3}} \alpha$ (Proposition \ref{prop:quartic_lower_bound}).

\begin{lemma}\label{lemma:fp_bound}
    Denoting $y^* = \argmin_{\C}g$ we have for every $y \in \C$
\BEQ\label{eq:strconv_p}
g^2(y)-g^2(y^*) \geq \frac{ \mu^2 }{4} \|y-y^*\|^4.
\EEQ
\end{lemma}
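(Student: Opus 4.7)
The plan is to reduce the claim to a direct application of the uniform convexity inequality \eqref{eq:unif_conv} evaluated at the minimizer, combined with a first-order optimality condition. Since $g > 0$ on $\C$, minimizing $g$ over $\C$ is equivalent to minimizing $g^2$ over $\C$, so $y^*$ is also the minimizer of $g^2$ on $\C$. Because $g$ is differentiable, so is $g^2 = g \cdot g$, with $\nabla(g^2)(y^*) = 2 g(y^*) \nabla g(y^*)$.

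The first step is to write down the variational inequality characterizing $y^*$: for every $y \in \C$,
\[
\langle \nabla(g^2)(y^*), y - y^* \rangle \geq 0.
\]
The second step is to invoke the uniform convexity hypothesis \eqref{eq:unif_conv} with the first argument equal to $y$ and the second argument equal to $y^*$:
\[
g^2(y) - g^2(y^*) - \langle \nabla(g^2)(y^*), y - y^* \rangle \geq \frac{\mu^2}{4} \|y - y^*\|^4.
\]
Adding the nonnegative quantity from the first step to the right-hand side of the rearranged uniform convexity bound immediately yields the claim.

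There is no real obstacle here; the statement is essentially the textbook consequence that uniform convexity together with the first-order optimality condition produces a quantitative growth bound around the minimizer. The only points requiring a brief mention are the equivalence of $\argmin g$ and $\argmin g^2$ on $\C$ (which uses $g > 0$) and the differentiability of $g^2$ at $y^*$ (inherited from that of $g$). The constants match what is needed for the subsequent analysis, since in the problem of interest $\mu^2/4 = \alpha^2/3$.
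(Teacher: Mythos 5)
Your proof is correct and follows essentially the same route as the paper: invoke the uniform-convexity inequality \eqref{eq:unif_conv} at $(y,y^*)$ and discard the inner-product term using the first-order optimality condition for $y^*$ as a minimizer of $g^2$ on $\C$ (which holds since $g>0$ there). The extra remarks on differentiability of $g^2$ and the constant identity $\mu^2/4=\alpha^2/3$ are harmless but not needed for the argument.
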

\begin{proof}
    Let $y \in \C$. \eqref{eq:unif_conv} implies
    \[ g^2(y) - g^2(y^*) - \la \nabla (g^2)(y^*),y-y^*\ra \geq \frac{ \mu^2 }{4} \|y-y^*\|^4. \]
    Since $g \geq 0$, $y^*$ is also the minimizer of $g^2$ on $\C$, and therefore $ \la \nabla (g^2)(y^*),y-y^*\ra \geq 0$.
\end{proof}

\subsubsection{Gradient descent} We first study the convergence rate of the standard gradient descent method applied to \eqref{eq:p_hom_gen}: start with $y_0 \in E$ and iterate
\BEQ\label{eq:gd}\tag{GD}
    \begin{split}
        &y_{k+1} = \argmin_{y \in \C}\, \Big\{ \la \nabla g(y_k), y-y_k \ra + \frac{L}{2}\|y-y_k\|^2\Big\},\quad k=0,1\dots,
    \end{split}
\EEQ
We define the \textit{relative accuracy} (recall that $g(y^*)>0$) at iteration $k$ as
$\delta_k \triangleq \frac{g(y_k)}{g(y^*)} - 1$.
\begin{proposition}\label{prop:convrate_gd}
    Let $\delta>0$. Then the maximal number of iterations $k$ required for \eqref{eq:gd} to reach a relative accuracy $\delta_k \leq \delta$ is at most
    \[  \frac{12L}{\mu} \left(  \ln_+(4 \delta_0) + \left( \sqrt{ 1 \over \delta} - 2 \right)_+ \right). \]
\end{proposition}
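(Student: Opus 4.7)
The plan is to derive a recurrence on the relative accuracy $\delta_k$ by combining the classical one-step descent inequality for projected gradient descent with the quadratic growth bound from Lemma \ref{lemma:fp_bound}, then to sum up the iteration count via a halving schedule split into two regimes.

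\textbf{Step 1: a telescoping progress bound.} The projected gradient step with stepsize $1/L$, combined with $L$-smoothness of $g$ and convexity, yields the standard three-point inequality $\|y_{k+1} - y^*\|^2 \leq \|y_k - y^*\|^2 - \frac{2}{L}(g(y_{k+1}) - g(y^*))$ for any $k$. This immediately implies that both $R_k := \|y_k - y^*\|$ and $\phi_k := g(y_k) - g(y^*)$ are non-increasing. Telescoping from index $k_0$ to $K$ and using the monotonicity of $\phi_k$ gives $(K - k_0)\phi_K \leq \frac{L}{2} R_{k_0}^2$.

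\textbf{Step 2: the key recurrence.} Combine Step 1 with Lemma \ref{lemma:fp_bound}. Since $g^2(y_{k_0}) - g^2(y^*) = \phi_{k_0}(2g(y^*) + \phi_{k_0}) = g(y^*)^2\, \delta_{k_0}(2 + \delta_{k_0})$, the lemma yields $R_{k_0}^2 \leq \frac{2 g(y^*)}{\mu}\sqrt{\delta_{k_0}(2+\delta_{k_0})}$. Dividing through by $g(y^*)$ and using the displayed inequality from Step 1 gives the central recurrence
\[
  (K - k_0)\,\delta_K \;\leq\; \frac{L}{\mu}\sqrt{\delta_{k_0}(2 + \delta_{k_0})}.
\]

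\textbf{Step 3: halving schedule with two regimes.} Construct a sequence $0 = k_0 < k_1 < \cdots < k_N$ with $\delta_{k_i} \leq \delta_{k_{i-1}}/2$, using the recurrence to bound each gap $k_i - k_{i-1}$. In the \emph{large-$\delta$} regime ($\delta_{k_{i-1}} \geq 1/4$) we have $(2+\delta)/\delta \leq 9$, so halving requires $O(L/\mu)$ iterations per step, and the $O(\log_+(4 \delta_0))$ halvings needed to reach $\delta_k \leq 1/4$ contribute the first term of the bound. In the \emph{small-$\delta$} regime ($\delta_{k_{i-1}} \leq 1/4$) we have $(2+\delta)/\delta \leq 9/(4\delta)$, so halving from $\delta_{k_{i-1}}$ requires $O(L/(\mu \sqrt{\delta_{k_{i-1}}}))$ iterations. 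Summing a geometric series of ratio $\sqrt{2}$ over halvings from $1/4$ down to the target $\delta$ gives $O((L/\mu)(\sqrt{1/\delta} - 2)_+)$, accounting for the second term; the threshold $\delta = 1/4$ explains the offset "$-2$" inside the square root, which ensures that this phase vanishes exactly when $\delta \geq 1/4$.

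\textbf{Main obstacle.} The conceptual heart of the argument is the recurrence of Step 2; once it is established, the halving analysis is routine. The main difficulty is the careful bookkeeping of absolute constants to match the prefactor $12$ in the stated bound, in particular closing the geometric sum in the small-$\delta$ phase tightly and aggregating the two phases into the claimed additive form.
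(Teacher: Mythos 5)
Your argument is correct in its overall structure and takes a genuinely different route from the paper. The paper derives a per-iteration contraction by restricting the minimization in the descent inequality to the segment $y_k + \tau(y^* - y_k)$ and then optimizing over $\tau$: this yields $\delta_{k+1} \leq \delta_k - \frac{\mu}{4L}\,\delta_k^{3/2}/\sqrt{2+\delta_k}$, after which the paper splits into two phases ($\delta_k > 1/4$ and $\delta_k \leq 1/4$), obtaining linear decrease in the first phase and telescoping $\delta_k^{-1/2}$ in the second. You instead invoke the classical telescoped three-point inequality $\|y_{k+1}-y^*\|^2 \leq \|y_k-y^*\|^2 - \frac{2}{L}\phi_{k+1}$, combine it with Lemma \ref{lemma:fp_bound} to bound $R_{k_0}^2$, and close the loop through a halving schedule. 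Both approaches are valid and use the same two ingredients ($L$-smoothness plus the $\mu$-growth of $g^2$); the paper's one-step contraction has the advantage that the $\delta_k^{-1/2}$-telescoping in the second phase produces the additive $\big(\sqrt{1/\delta}-2\big)_+$ term exactly with prefactor $12L/\mu$, whereas your halving schedule naturally gives the right orders of magnitude but, as you acknowledge, would require careful bookkeeping of the $\sqrt{2}$-geometric sum to check whether the same constant $12$ (or any fixed constant) closes. In particular, to exactly recover the clean ``$-2$'' offset you would essentially have to mimic the paper's $\delta_k^{-1/2}$ potential; the halving argument as you sketch it would more directly yield a bound of the form $C(L/\mu)\sqrt{1/\delta}$ for $\delta < 1/4$. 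So: the approach works and is an interesting alternative, but as written it establishes the $O\!\left(\frac{L}{\mu}\left(\ln_+\delta_0 + \delta^{-1/2}\right)\right)$ rate rather than the precise constant claimed in the proposition.
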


\begin{proof}
Indeed, in view of our assumptions and Lemma \ref{lemma:fp_bound}, we have
\[
\begin{array}{rcl}
g(y_{k+1}) & \leq & \min\limits_{y \in {\cal C}} \Big\{ g(y_k) + \la \nabla g(y_k), y-y_k \ra + \frac{L}{2}\|y-y_k\|^2\Big\}\\
\\
& \leq & \min\limits_{0 \leq \tau \leq 1} \Big\{ g(y_k) + \tau \la \nabla g(y_k), y^*-y_k \ra + \frac{L \tau^2}{2}\|y^*-y_k\|^2\Big\}\\
\\
& \leq & \min\limits_{0 \leq \tau \leq 1} \Big\{ g(y_k) - \tau [g(y_k) - g(y^*)] + \frac{L \tau^2 }{\mu} \sqrt{ g^2(y_k) - g^2(y^*)}\Big\}.
\end{array}
\]
The optimal value of $\tau$ in the latter problem is $\tau^* = {\mu \over 2L} \sqrt{g(y_k)- g(y^*) \over g(y_k) + g(y^*)} \leq {1 \over 2}$. Hence,
\[
\delta_{k+1} \leq \delta_k - {\mu \over 4L} \delta_k \sqrt{ g(y_k) - g(y^*) \over g(y_k) + g(y^*)} = \delta_k - {\mu \over 4L} {\delta_k^{3/2} \over \sqrt{2 + \delta_k}}.
\]
Note that the values $\{ \delta_k \}_{k \geq 0}$ decrease monotonically. Denote by $t_0$ the moment when $\delta_k \leq {1 \over 4}$ for all $k > t_0$. Then, for all $k \leq t_0$ we have $2 \leq 8 \delta_k$, and therefore
\[
\delta_{k+1} \leq \left(1 - {\mu \over 12 L} \right) \delta_k \leq \exp \Big\{ - {\mu \over 12 L} \Big\} \delta_k, \quad 0 \leq k < t_0.
\]
Hence, $t_0 \leq {12 L \over \mu} \ln_+ (4\delta_0)$. For $k \geq t_0$, we have $\delta_{k+1} \leq \delta_k - {\mu \over 6L} \delta_k^{3/2}$. Therefore,
\[
\delta_{k+1}^{-1/2} - \delta_k^{-1/2} = {\delta_k - \delta_{k+1} \over \delta_k^{1/2} \delta_{k+1}^{1/2}(\delta_k^{1/2}+\delta_{k+1}^{1/2})} \geq \; {\mu \over 12 L}.
\]
Thus, we get $\delta_k^{-1/2} \geq \delta^{-1/2}_{t_0} + {\mu  \over 12L}(k - t_0) \geq 2 + {\mu  \over 12L}(k - t_0)$.
\end{proof}

\subsubsection{Accelerated gradient descent with restarts} For a better convergence rate, consider the \textit{accelerated gradient method} (see e.g., \citep[Section 2.2.4]{Nesterov2020})
\BEQ\label{eq:fgd}\tag{AGD}
    \begin{split}
        &y_0 \in E, z_0 = y_0\\
        &y_{k+1} = \argmin_{y \in \C}\, \Big\{ \la \nabla g(z_k), y-z_k \ra + \frac{L}{2}\|y-z_k\|^2\Big\},\\
        &z_{k+1} = y_{k+1} + \frac{k}{k+3}\left( y_{k+1} - y_k \right),\\
    \end{split}
\EEQ
for $k \geq 0$. Let us write ${\rm AGD}(y_0,k)$ the output $y_k$ of this algorithm initialized at $y_0$ with $k$ iterations. Using the technique of scheduled restarts \citep{Nemirovski1983,Nesterov2020} allows to prove an improved convergence rate for our setting.

\begin{proposition}\label{prop:restart}
For $t \geq 0$ and $y_0 \in E$, 
consider the following scheme: 
    \BEQ\label{eq:fgd_restart}\tag{AGD-restart}
    \begin{split}
    &\mbox{Compute } \hat{y}_{t+1} = {\rm AGD}(y_{t}, 2^t),\\
    &\mbox{Choose } y_{t+1} = \argmin_y \big\{g(y)\,:\, y \in \{y_0,\dots,y_{t}, \hat{y}_{t+1}\}\big\},
    \end{split}
    \EEQ
Then the number of total iterations (i.e., gradient oracle calls in \eqref{eq:fgd}) needed to reach a relative accuracy less than $\delta$ is at most
\BEQ
 16 \sqrt{\frac{L}{\mu}} \frac{\left(\delta_0^{1/4} + 2^{1/4}\right) }{\delta^{1/4}}
\EEQ
\end{proposition}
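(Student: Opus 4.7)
The plan is to combine the standard convergence guarantee of \eqref{eq:fgd} with the squared uniform-convexity bound of Lemma \ref{lemma:fp_bound}, and then analyze the resulting one-step contraction across restarts by induction.

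First I would recall that the classical analysis of \eqref{eq:fgd} for a convex function with $L$-Lipschitz gradient on a closed convex set yields, after $k$ iterations starting from $y_0$, a bound of the form $g(\mathrm{AGD}(y_0,k)) - g(y^*) \leq \frac{2L \|y_0 - y^*\|^2}{k^2}$. Combining this with Lemma \ref{lemma:fp_bound} gives $\|y_0 - y^*\|^2 \leq \frac{2}{\mu}\sqrt{g^2(y_0)-g^2(y^*)} = \frac{2\,g(y^*)}{\mu}\sqrt{\delta_0(2+\delta_0)}$, where $\delta_0 = g(y_0)/g(y^*)-1$. Dividing the AGD bound by $g(y^*)$ therefore produces the key single-run inequality
\BEQ\label{eq:agd_onerun}
    \delta_k \;\leq\; \frac{4L}{\mu\,k^2}\sqrt{\delta_0(2+\delta_0)}.
\EEQ

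Next I would apply \eqref{eq:agd_onerun} to the $t$-th restart, which uses $k=2^t$ iterations starting from $y_t$: this yields $\hat\delta_{t+1} \leq \frac{4L}{\mu\,4^t}\sqrt{\delta_t(2+\delta_t)}$. Because $y_{t+1}$ is chosen as the best iterate among $\{y_0,\dots,y_t,\hat y_{t+1}\}$, the sequence $\delta_t$ is monotonically non-increasing, so $\delta_t \leq \delta_0$ throughout, and in particular $\sqrt{2+\delta_t}\leq\sqrt{2+\delta_0}$. Setting $u_t := \sqrt{\delta_t}$ and $b^2 := \frac{4L\sqrt{2+\delta_0}}{\mu}$, the recursion becomes $u_{t+1}\leq \frac{b}{2^t}\sqrt{u_t}$.

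The technical core is then an induction showing $u_t \leq C/4^t$ with $C := \max(16b^2, u_0)$. The base case is immediate, and for the inductive step one computes $u_{t+1}\leq \frac{b}{2^t}\sqrt{C}/2^t = \frac{b\sqrt{C}}{4^t} \leq \frac{C}{4^{t+1}}$, where the last inequality uses $\sqrt{C}\geq 4b$. Unrolling, $\delta_t \leq \max(256\, b^4,\delta_0)/16^t$, which establishes the $\mathcal{O}(\kappa^2/K^4)$-type decay.

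Finally I would translate this back to the total iteration count. After $T$ restarts the total number of AGD steps is $\sum_{t=0}^{T-1} 2^t = 2^T-1$. Requiring $\max(256\, b^4,\delta_0)/16^T \leq \delta$ gives $2^T \leq \max(4b,\delta_0^{1/4})/\delta^{1/4}$ (up to a factor of $2$ from rounding $T$ up), and plugging in $b = 2\sqrt{L/\mu}\,(2+\delta_0)^{1/4}$, together with the subadditivity $(2+\delta_0)^{1/4}\leq 2^{1/4}+\delta_0^{1/4}$ and $\sqrt{L/\mu}\geq 1$, produces the announced bound $16\sqrt{L/\mu}\,(\delta_0^{1/4}+2^{1/4})/\delta^{1/4}$. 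The main obstacle is purely bookkeeping: threading the correct numerical constants through the AGD bound, the square-root appearing in the uniform convexity conversion, and the $\max(\cdot,\cdot)$ in the induction so that the final constant lands at exactly $16$; there is no conceptual difficulty beyond the induction above.
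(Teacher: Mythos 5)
Your proposal is correct and tracks the paper's argument closely: both prove that the relative accuracy decays like $16^{-t}$ across restarts and then sum the geometric series of inner AGD iterations. The single-run inequality $\delta_k \leq \frac{4L}{\mu k^2}\sqrt{\delta_0(2+\delta_0)}$ is the same chain (AGD bound plus Lemma~\ref{lemma:fp_bound}) as in the paper's derivation of $\delta_{t+1}\le \frac{4L}{4^t\mu}\sqrt{(1+\delta_t)^2-1}$, and your $\sqrt{\delta_t(2+\delta_t)}$ is literally $\sqrt{(1+\delta_t)^2-1}$. The two proofs differ only in bookkeeping: you bound the square root by pulling out the monotone factor $\sqrt{2+\delta_t}\le\sqrt{2+\delta_0}$, substitute $u_t=\sqrt{\delta_t}$ to collapse the recursion to $u_{t+1}\le(b/2^t)\sqrt{u_t}$, and run a single clean induction $u_t\le C/4^t$ with the max-constant $C=\max(16b^2,u_0)$ absorbing the warm-up; the paper instead bounds $\sqrt{(1+\delta_t)^2-1}\le\delta_t+\sqrt{2\delta_t}$ and introduces a threshold index $t_0$ past which $\delta_t\le\delta_0/16^{t-t_0}$, handling the warm-up via the definition of $t_0$. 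Both routes land on the same constant $16$, and your constant bookkeeping does indeed close (the $\max$ in $C$ is dominated by the $4b$ branch because $\sqrt{L/\mu}\ge 1$), so the final paragraph is not actually an open issue — it checks out.
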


\begin{proof}
Denote $\delta_t = {g(y_t) \over g(y^*)}-1$. Let us prove by induction the following bound:
\BEQ\label{eq:rec_fgd}
\delta_t \leq \frac{\delta_0}{16^{t-t0}} \quad \forall t \geq t_0,
\EEQ
where $t_0$ is the smallest integer such that
\BEQ\label{eq:def_t0} 4^{t_0}\geq  \frac{64 L}{\mu }\left( 1 + \sqrt{\frac{2}{\delta_0}}\right).\EEQ
Condition \eqref{eq:rec_fgd} is satisfied for $t=t_0$, because the outer scheme \eqref{eq:fgd_restart} is monotone by construction. Assume that it holds for some $t\geq t_0$. Then, by the known convergence result of the \eqref{eq:fgd} scheme \citep[Thm 10.34]{Beck17}, we have
    \[g(y_{t+1})-g(y^*) \leq \frac{2L\|y_{t} - y^*\|^2}{ (2^{t})^2} . \]
    Then,by Lemma \ref{lemma:fp_bound}, we get the following bound:
    \BEQ
        \begin{split}
            \delta_{{t+1}} &\leq \frac{4L}{4^t \mu  } \sqrt{ (1 + \delta_{t})^2 - 1} \leq \frac{4L}{4^t \mu   } \left( \delta_{t} + \sqrt{2\delta_{t}} \right) \leq \frac{4L}{4^t \mu   } \left( \frac{\delta_{0}}{16^{t-t_0}} + \frac{\sqrt{2\delta_{0}}}{4^{t-t_0}} \right)
        \end{split}
    \EEQ
    where the last line uses the induction hypothesis \eqref{eq:rec_fgd}. It follows that
    \BEQ \begin{split}
            \delta_{{t+1}} &\leq \frac{\delta_0}{16^{t+1-t_0}} \cdot \frac{4 \cdot 16 L}{4^t \mu   } \left( 1 + \sqrt{\frac{2}{\delta_0}} 4^{t-t_0} \right) \\ & \leq \frac{\delta_0}{16^{t+1-t_0}} \cdot \frac{64 L}{4^{t_0} \mu   } \left( 4^{t_0-t} + \sqrt{\frac{2}{\delta_0}}  \right) \\
            &\leq \frac{\delta_0}{16^{t+1-t_0}} \cdot \frac{64 L}{4^{t_0} \mu   } \left( 1 + \sqrt{\frac{2}{\delta_0}}  \right) \leq \frac{\delta_0}{16^{t+1-t_0}},
    \end{split} \EEQ
    where the last inequality follows from \eqref{eq:def_t0}, proving thus the induction. 

    Now, assume that after $T$ outer iterations, procedure \eqref{eq:fgd_restart} has reached an accuracy $\delta_T \geq \delta$ for some $\delta>0$. Hence, by \eqref{eq:rec_fgd} we have $16^T \leq 16^{t_0}\delta_0 / \delta$. Note also by definition \eqref{eq:def_t0} of $t_0$, 
    \[ 4^{t_0-1} \leq  \frac{64 L}{\mu }\left( 1 + \sqrt{\frac{2}{\delta_0}}\right). \]
    Thus, we can bound the total number of inner iterations performed by the method as
    \BEQ
        \begin{split}
            \sum_{t=0}^{T-1} 2^t = 2^{T}-1 & \leq 2^{t_0} \left(\frac{\delta_0}{\delta}\right)^{1/4} \leq 2^{t_0} \left(\frac{\delta_0}{\delta}\right)^{1/4}\\
             & \leq \sqrt{\frac{2^8 L}{\mu} \left( 1 + \sqrt{\frac{2}{\delta_0}}\right) } \left(\frac{\delta_0}{\delta}\right)^{1/4}\\
             &\leq 16 \sqrt{\frac{L}{\mu}} \left( 1 + \left(\frac{2}{\delta_0}\right)^{1/4}\right)\left(\frac{\delta_0}{\delta}\right)^{1/4}.
    \end{split}
    \EEQ
\end{proof}

\subsection{Convergence rate on the quartic problem}

After studying the complexity of methods for solving \eqref{eq:p_hom}, we get back to the original problem \eqref{eq:p}: the procedure is summarized in Algorithms \ref{algo:gd} (basic version) and \ref{algo:fgd} (accelerated version). 

\paragraph{Implementation of the projected gradient step} We describe how to compute the projected gradient step to get $y_{k+1}$ in Algorithm \ref{algo:gd}. First, note that we can decompose it as a gradient step followed with a projection step:
\begin{align} 
    \hat{y}_{k+1} &= y_k - \frac{1}{12 \beta \sqrt{ \rho(y_k)}} B^{-1} \nabla \rho(y_k),\\
    \label{subeq:proj}y_{k+1} &= \argmin_{
    \substack{
      y \in \mathcal{K}\\
      \la  y,c \ra = 1
    }
    }
    \,\,\frac{1}{2} \|y - \hat{y}_{k+1}\|^2,
\end{align}
where we recall that $B$ is the positive definite operator defining the norm.

When the original problem is unconstrained ($\K = E$), the projection is
\begin{align*} 
    y_{k+1} &= \hat{y}_{k+1} + \left( \frac{1 - \la \hat{y}_{k+1},c \ra }{ \|c\|^2_* } \right) B^{-1} c.
\end{align*}
When $\mathcal{K}$ is a cone to which the projection is easily computable, we can solve a dual of Problem \eqref{subeq:proj} which writes
\[
  \max_{t \in \reals} \min_{y \in \mathcal{K}} \Big\| y- \hat{y}_{k+1} - \frac{t}{2}B^{-1}c \Big\|^2 + t\left( 1-\la \hat{y}_{k+1},c \ra \right) - \frac{1}{2}t^2 \|c\|_*^2.
\]
The solution $t^*$ can be computed with a bisection method on $t$, which requires only a logarithmic number of projections on $\mathcal{K}$. Then, the projection is $y_{k+1} = \hat{y}_{k+1} + t^* B^{-1}c$.
\begin{algorithm}[h]
\begin{spacing}{1.4}
\caption{Homogenized-GD$(x_0, \beta, K)$}
\label{algo:gd}
\begin{algorithmic}[1]
\STATE \textbf{Initialize} $y_0\in E.$
\FOR{$k=0,1,\ldots, K-1$}
\STATE $ y_{k+1} = \argmin\, \Big\{ \frac{1}{2\sqrt{\rho(y_k)}} \la \nabla \rho(y_k), y-y_k \ra + 3{\beta}\|y-y_k\|^2 \,:\, y \in \K, \, \la c,y \ra = 1  \Big\}.$
\ENDFOR
\STATE \textbf{Return} $x_K = y_K / \left( 4\rho(y_K)\right)^{1/3}$.
\end{algorithmic}
\end{spacing}
\end{algorithm}

\begin{algorithm}[h]
\begin{spacing}{1.6}
\caption{Accelerated Homogenized-GD$(x_0, \beta, T)$}
\label{algo:fgd}
\begin{algorithmic}[1]
\STATE \textbf{Initialize} $y_0 \in E.$
\FOR{$t=0,1,\ldots, T-1$}
\STATE Set $y_t^0 = z_t^0 = y_{t-1}$
\FOR{$k=0,1,\ldots, 2^t-1$}
\STATE $ y_{t}^{k+1} = \argmin\, \Big\{ \frac{1}{2\sqrt{\rho(z_t^k)}} \la \nabla \rho(z_t^k), y-z_t^k \ra + 3{\beta}\|y-z_t^k\|^2 \,:\, y \in \K, \, \la c,y \ra = 1  \Big\}.$
\STATE $z_t^{k+1} = y_t^{k+1} + \frac{k}{k+3}\left( y_t^{k+1} - y_t^k \right)$
\ENDFOR
\STATE Set $y_t = \argmin_y \big\{ \rho(y) : y \in \{y_{t-1}, y_t^{2^t}\}\big\} $
\ENDFOR
\STATE \textbf{Return} $x_T = y_T / \left( 4\rho(y_T)\right)^{1/3}$.
\end{algorithmic}
\end{spacing}
\end{algorithm}

\begin{theorem}\label{thm:final_rate}
    Algorithm \ref{algo:gd} finds a point $x_K$ satisfying
    \BEQ
        \frac{f(x_K) - f(x^*)}{|f(x^*)|} \leq \delta
    \EEQ
    in at most
    $ \mathcal{O}\left( {\kappa}\left( \log(\frac{\delta_0}{\delta}) + \sqrt{\frac{1}{\delta}}\right)\right)$ iterations, where the initial relative accuracy is $\delta_0 = \sqrt{\frac{\rho(y_0)}{\rho(y^*)}} - 1$, with $y^*$ being the solution of the homogenized problem \eqref{eq:p_hom}, and $\kappa = \beta / \alpha$ is the quartic condition number defined by \eqref{eq:q_cond}. 
\end{theorem}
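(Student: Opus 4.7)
The proof is essentially a packaging result: Algorithm \ref{algo:gd} is exactly the projected gradient method \eqref{eq:gd} applied to $g = \sqrt{\rho}$ on the feasible set $\mathcal{C} = \{y \in \mathcal{K} \,:\, \la c, y\ra = 1\}$. The plan is therefore to match constants, invoke Proposition \ref{prop:convrate_gd}, and translate the resulting bound on $g$ into a relative-accuracy bound for the original objective via Proposition \ref{prop:hom}(ii).

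First I would identify the update in line 3 of Algorithm \ref{algo:gd} with \eqref{eq:gd}. Since $\nabla g(y_k) = \nabla \rho(y_k)/(2\sqrt{\rho(y_k)})$, the linear term in the subproblem is exactly $\la \nabla g(y_k), y - y_k\ra$, and the quadratic penalty $3\beta \|y - y_k\|^2$ corresponds to the step-size $1/(6\beta) = 1/L$, where $L = 6\beta$ is the Lipschitz constant of $\nabla g$ provided by Proposition \ref{prop:quartic_form_hessian}. To apply Proposition \ref{prop:convrate_gd}, I also need the uniform-convexity constant of $g^2 = \rho$: Proposition \ref{prop:quartic_lower_bound} gives it equal to $4\alpha^2/3$, and matching with the $\mu^2/4$ in \eqref{eq:unif_conv} yields $\mu = 2\alpha/\sqrt{3}$. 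Consequently $L/\mu = 3\sqrt{3}\,\kappa$, and the positivity of $g$ on $\mathcal{C}$ follows from \eqref{eq:q_cond} together with Assumption \ref{ass:polar}.

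Applying Proposition \ref{prop:convrate_gd} with these constants, the number of iterations needed to reach a homogenized relative accuracy $\delta_K^{\rm hom} \triangleq g(y_K)/g(y^*) - 1 \leq \varepsilon$ is at most
\[
\frac{12 L}{\mu}\Bigl(\ln_+(4\delta_0) + \bigl(\sqrt{1/\varepsilon} - 2\bigr)_+\Bigr) = \mathcal{O}\!\left(\kappa \Bigl(\log(\delta_0/\varepsilon) + \sqrt{1/\varepsilon}\,\Bigr)\right),
\]
where the initial value $\delta_0 = g(y_0)/g(y^*) - 1 = \sqrt{\rho(y_0)/\rho(y^*)} - 1$ coincides with the one in the theorem. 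Finally, Proposition \ref{prop:hom}(ii) applied to the returned point $x_K = s(y_K) y_K = y_K / (4\rho(y_K))^{1/3}$ gives $(f(x_K) - f(x^*))/|f(x^*)| \leq \tfrac{2}{3}\,\delta_K^{\rm hom}$, so choosing $\varepsilon = 3\delta/2$ produces the stated bound.

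No step in this chain is delicate; the substance has already been done in Sections \ref{s:quartic_prop} and \ref{ss:convrates}. The only bookkeeping items are absorbing the constant $3\sqrt{3}$ into the $\mathcal{O}(\kappa)$ factor and the factor $2/3$ coming from the homogenization inequality, both of which disappear inside the $\mathcal{O}$ notation. If any step can be called the main obstacle, it is making sure that the restriction $\la c, y_0\ra = 1$ can be imposed without loss of generality on the initialization (one may always rescale $y_0$, which does not change $\delta_0$ since both $\rho(y_0)$ and $\rho(y^*)$ appear only through the homogeneous ratio).
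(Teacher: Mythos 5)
Your proof is correct and takes essentially the same route as the paper: identify Algorithm~\ref{algo:gd} as \eqref{eq:gd} applied to $g=\sqrt{\rho}$ on $\mathcal{C} = \K \cap \{\la c,y\ra = 1\}$, plug $L = 6\beta$ (Proposition~\ref{prop:quartic_form_hessian}) and $\mu = 2\alpha/\sqrt{3}$ (Proposition~\ref{prop:quartic_lower_bound}) into Proposition~\ref{prop:convrate_gd}, and carry the bound back via Proposition~\ref{prop:hom}(ii) — exactly the paper's chain, with $L/\mu = 3\sqrt{3}\,\kappa = \sqrt{27}\,\kappa$ matching.

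One small correction to your closing parenthetical: $\delta_0 = \sqrt{\rho(y_0)/\rho(y^*)}-1$ is \emph{not} invariant under rescaling $y_0 \mapsto s y_0$, because $\rho$ is homogeneous of degree 4 so the ratio picks up a factor $s^2$ under the square root; the quantity $\delta_0$ is a relative accuracy only once $y_0$ is normalized to lie on the affine slice $\la c, y_0\ra = 1$ (equivalently, read $\delta_0$ as the accuracy after the first projection in \eqref{eq:gd}). This does not affect the validity of the main argument.
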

\begin{proof}
First, note that the algorithm outputs a point $y=s(y)y$ where $s$ is defined in \eqref{eq:def_tau}. Then, Proposition \ref{prop:hom} ensures
\BEQ\label{eq:recall_hom}  \frac{f(x) - f(x^*)}{|f(x^*)|} \leq \frac{2}{3}\frac{ \sqrt{\rho(y)} - \sqrt{\rho(y^*)} }{\sqrt{\rho(y^*)}}, \EEQ
where $y^*$ is a solution of the homogenized problem \eqref{eq:p_hom}. This problem satisfies all assumptions for the general class studied in Section \ref{ss:convrates}: $g = \sqrt{\rho}$ is convex and smooth with constant $L = 6{\beta}$ (Proposition \ref{prop:quartic_form_hessian}), and $g^2 = \rho$ satisfies \eqref{eq:unif_conv} with $\mu = \sqrt{\frac{4}{3}} \alpha$ (Proposition \ref{prop:quartic_lower_bound}). Algorithm \ref{algo:gd} corresponds to procedure \eqref{eq:gd} applied to \eqref{eq:p_hom}.

Proposition \ref{prop:convrate_gd} states that the number of iterations $K$ needed for finding a point $y_K$ satisfying $\sqrt{\rho(y_K)}\leq (1+\delta) \sqrt{\rho(y^*)}$ is at most
\BEQ
\frac{12L}{\mu} \left(  \ln_+(4 \delta_0) + \left( \sqrt{ 1 \over \delta} - 2 \right)_+ \right).
\EEQ
Recalling that $L = 6 {\beta},\mu = \sqrt{\frac{4}{3}}\alpha$, we have $\frac{L}{\mu} = \sqrt{27} \kappa$, which, combined with \eqref{eq:recall_hom}, allows to conclude.
\end{proof}

A similar reasoning with Proposition \ref{prop:restart} justifies the accelerated variant.

\begin{theorem}\label{thm:final_rate_acc}
    Accelerated homogenized gradient descent (Algorithm \ref{algo:fgd}) finds a point $x_K$ satisfying
    \BEQ
        \frac{f(x_K) - f(x^*)}{|f(x^*)|} \leq \delta
    \EEQ
    in at most
    $\mathcal{O}\left( \frac{\sqrt{\kappa}}{\delta^{1/4}} \big( 1 + \delta_0^{1/4}\big)\right)$ projected gradient steps, where $\delta_0 = \sqrt{\rho(y_0) \over \rho(y^*)}-1$.
\end{theorem}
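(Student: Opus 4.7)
The plan is to mirror the proof of Theorem \ref{thm:final_rate}, substituting the restart-based complexity result of Proposition \ref{prop:restart} in place of the plain gradient descent result of Proposition \ref{prop:convrate_gd}. The work has essentially already been done: once I verify that Algorithm \ref{algo:fgd} is exactly the scheme \eqref{eq:fgd_restart} applied to the homogenized problem \eqref{eq:p_hom}, the desired complexity bound is read off by plugging in the appropriate constants.

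First I would observe that, by construction, the algorithm returns $x_T = y_T / (4\rho(y_T))^{1/3} = s(y_T) y_T$, where $s(\cdot)$ is the scaling defined in \eqref{eq:def_tau}. The feasibility constraint $\la c,y \ra = 1$ enforced inside each inner gradient step is preserved by the outer scheme (since $y_t$ is chosen among iterates all satisfying this constraint). Hence Proposition \ref{prop:hom} applies and gives
\begin{equation}
\frac{f(x_T) - f(x^*)}{|f(x^*)|} \leq \frac{2}{3}\cdot \frac{\sqrt{\rho(y_T)} - \sqrt{\rho(y^*)}}{\sqrt{\rho(y^*)}},
\end{equation}
where $y^*$ is a minimizer of \eqref{eq:p_hom}. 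Therefore, to establish a relative accuracy $\delta$ for $f$, it suffices to produce a relative accuracy of order $\delta$ on $g = \sqrt{\rho}$ at the homogenized problem.

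Next, I would invoke the general framework of Section \ref{ss:convrates} with $\mathcal{C} = \K \cap \{y \,:\, \la c,y\ra = 1\}$. By Proposition \ref{prop:quartic_form_hessian}, $g = \sqrt{\rho}$ is convex with Lipschitz gradient of constant $L = 6\beta$; by Proposition \ref{prop:quartic_lower_bound}, $g^2 = \rho$ is uniformly convex of degree $4$ with parameter $\mu = \sqrt{4/3}\,\alpha$, matching the squared uniform convexity assumption \eqref{eq:unif_conv}. The outer loop of Algorithm \ref{algo:fgd}, which runs \eqref{eq:fgd} for $2^t$ inner steps and then keeps the best iterate, is precisely the scheme \eqref{eq:fgd_restart}.

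Finally, I would apply Proposition \ref{prop:restart}: it gives that the total number of inner gradient steps needed to reach relative accuracy $\delta$ (on $g$) is at most
\begin{equation}
16\sqrt{\frac{L}{\mu}}\,\frac{\delta_0^{1/4} + 2^{1/4}}{\delta^{1/4}}.
\end{equation}
Substituting $L/\mu = \sqrt{27}\,\kappa$ and combining with the relative-accuracy transfer from Proposition \ref{prop:hom} (which only rescales $\delta$ by a constant factor), I obtain the claimed $\mathcal{O}\bigl(\sqrt{\kappa}\,\delta^{-1/4}(1+\delta_0^{1/4})\bigr)$ bound. There is no genuine obstacle here beyond the bookkeeping of constants; the only mild subtlety is checking that applying Proposition \ref{prop:hom} to translate between accuracy on $g$ and accuracy on $f$ does not degrade the dependence on $\delta$, which is immediate since the factor $2/(3\sqrt{\rho(y^*)})$ absorbs into absolute constants via $|f(x^*)|$.
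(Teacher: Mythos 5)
Your proof is correct and is exactly the argument the paper intends: the paper itself only states that Theorem~\ref{thm:final_rate_acc} follows ``by a similar reasoning'' to Theorem~\ref{thm:final_rate} with Proposition~\ref{prop:restart} replacing Proposition~\ref{prop:convrate_gd}, and you have carried out precisely that substitution with the same constants $L=6\beta$, $\mu = \sqrt{4/3}\,\alpha$. The one minor imprecision is the closing remark about the factor $2/(3\sqrt{\rho(y^*)})$ ``absorbing via $|f(x^*)|$'': Proposition~\ref{prop:hom} already relates the two \emph{relative} accuracies, so the $2/3$ simply rescales $\delta$ by a constant, which is immaterial for the $\mathcal{O}(\cdot)$ claim.
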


\section{Optimal preconditioning}\label{s:precond}

We showed that the efficiency of our methods crucially depends on the {quartic condition number}. In this section, we propose to search for a norm-inducing operator $B$, or \textit{preconditioner}, such that the condition number of $\rho$ with respect to the Euclidean norm $\|\cdot\|_B$ is small.

We focus on the particular case when $\rho$ corresponds to the quartic part of a quadratic inverse problem \eqref{eq:noncvx_p} with positive semidefinite operators:
\BEQ\label{eq:rho_form1}
    \rho(x) = \sum_{i=1}^m \la x, B_i x \ra^2.
\EEQ
For simplicity, we choose $E = \reals^n$, and let the following standing assumptions hold. 
\begin{assumption}\label{ass:b}
    The $m$-uple of operators $\mathcal{B} = (B_1,\dots B_m) \in (\symm^n)^m$ is such that
\begin{enumerate}[label=(\roman*)]
    \item $m \geq n$,
    \item $B_1, \dots B_m$ are nonzero positive semidefinite matrices,
    \item each $B_i$ has rank at most $r$, where $1 \leq r \leq n$,
    \item the matrix $\sum_{i=1}^m B_i$ is of full rank.
\end{enumerate}
\end{assumption}
These assumptions are satisfied for the phase retrieval problem \eqref{eq:phase_ret}, where $r=2$, and distance matrix completion, for which $r=3$ in the majority of applications\footnote{The full rank assumption holds generically for the phase retrieval with $m$ high enough, and is satisfied for distance matrix completion if the graph is connected.}. 

We start in Section \ref{ss:levg} by addressing a more general goal. That is a quadratic approximation of the function $x\mapsto \left(\sum_{i=1}^m\la B_i x, x\ra^{p}\right)^{1/p}$ for some general power $p > 1$. We show that there exists an operator $B_*$ for which the approximation ratio of this function with respect to $\|\cdot\|^2_{B_*}$ is $n^{\frac{p-1}{p}}$, and that it can be efficiently computed by a fixed-point iteration method described in Section \ref{ss:algo_fp}. 


\subsection{Existence and characterization of the optimal preconditioner}\label{ss:levg}
Let us fix a power $p >1$, and write its conjugate power $q$ as $\frac{1}{p}+\frac{1}{q}=1$. For a given $m$-uple of matrices $\mathcal{B} = (B_1,\dots B_m)$ satisfying the assumptions above, we define the function
\[ W_{\mathcal{B},p}(x) = \sum_{i=1}^m\la B_i x, x\ra^{p},\]
for which the quartic case corresponds to $p=2$. Consider the following problem.
\vspace{1ex}
\begin{center}
\textit{
How good can be a quadratic approximation of the function $W_{\mathcal{B},p}^{1/p}(\cdot)$?}
\end{center}
\vspace{1ex}
Let us look at quadratic function $\la B(\tau)\cdot,\cdot \ra$ with an operator $B$ of the following form:  
\BEQ\label{eq:def_btau} 
B(\tau) = \sum_{i=1}^m \tau_i B_i, \quad \tau \in \reals^m_{++}.
\EEQ
Since $\tau > 0$, Assumption \ref{ass:b} implies that $B(\tau) \succ 0$. It induces therefore a norm $\|x\|_{B(\tau)} = \la B(\tau)x, x \ra^{\frac{1}{2}}$. Let us look how good is it for approximating $W_{\mathcal{B},p}$.  

\subsubsection{Comparing $W_{\mathcal{B},p}(x)$ and $\|x\|_{B(\tau)}$} 
Our analysis relies on the quantities
\BEQ\label{eq:levg_def}
    \ell_i \left(\tau;\mathcal{B}\right) =  \Tr \left[\tau_i B(\tau)^{-1}B_i \right],
\EEQ
defined for $i = 1\dots m$ and $\tau\in \reals^m_{++}$.
The $\ell_i(\tau;\mathcal{B})$ measures the importance of the matrix $\tau_i B_i$ in forming the whole $B(\tau)$. If the $B_i$'s are of rank 1, they correspond to the classical notion of \textit{leverage scores} used in regression analysis and randomized Linear Algebra \citep{MAHONEY2012,Martinsson2020randomized}. 

\begin{lemma}\label{lemma:levg_score}
    Under Assumption \ref{ass:b}, we have for any $\tau \in \reals^{m}_{++}$ and $i = 1\dots m$
    \begin{enumerate}[label=(\roman*)]
        \item \label{elem:l1} $\ell_i(\tau; \mathcal{B}) \in [0,{\rm rank}(B_i) ]$,
        \item \label{elem:l2} $\sum_{j=1}^m \ell_j(\tau; \mathcal{B}) = n$,
        \item \label{elem:l3} $\tau_i \la x, B_i x \ra \leq \ell_i(\tau; \mathcal{B}) \, \|x\|_{B(\tau)}^2 $ for all $x\in\reals^n$.
    \end{enumerate}
\end{lemma}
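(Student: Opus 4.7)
The plan is to reduce everything to the matrix $M_i := \tau_i\, B(\tau)^{-1/2} B_i\, B(\tau)^{-1/2}$, which is a similarity/congruence transform of $\tau_i B_i$, so all three claims become properties of $M_i$ under congruence-invariance of rank and trace.

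For part (ii), I would just compute directly:
\[
\sum_{j=1}^m \ell_j(\tau;\mathcal{B}) = \Tr\!\left[B(\tau)^{-1} \sum_{j=1}^m \tau_j B_j\right] = \Tr\!\left[B(\tau)^{-1} B(\tau)\right] = \Tr(I_n) = n.
\]

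For part (i), the lower bound $\ell_i \geq 0$ follows because $M_i$ is a congruence of the PSD matrix $\tau_i B_i$, so $M_i \succeq 0$ and its trace $\ell_i$ is nonnegative. For the upper bound, I would use the key relation $\tau_i B_i \preceq \sum_{j=1}^m \tau_j B_j = B(\tau)$, which follows from $\tau_j > 0$ and $B_j \succeq 0$. Conjugating by $B(\tau)^{-1/2}$ gives $0 \preceq M_i \preceq I_n$, so every eigenvalue of $M_i$ lies in $[0,1]$. Since congruence by an invertible matrix preserves rank, $\rank(M_i) = \rank(B_i)$, and the nonzero eigenvalues of $M_i$ are at most $1$ each, so $\ell_i = \Tr(M_i) \leq \rank(M_i) = \rank(B_i)$.

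For part (iii), I would substitute $y = B(\tau)^{1/2} x$, so that $\|x\|_{B(\tau)}^2 = \|y\|_2^2$ and
\[
\tau_i \la x, B_i x\ra = \la y, M_i y\ra \leq \lambda_{\max}(M_i)\,\|y\|_2^2.
\]
Since $M_i \succeq 0$, all its eigenvalues are nonnegative, so $\lambda_{\max}(M_i) \leq \Tr(M_i) = \ell_i(\tau;\mathcal{B})$, yielding the claim.

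The entire argument is linear-algebraic and routine once one introduces the congruent matrix $M_i$; the only step requiring any care is observing $\tau_i B_i \preceq B(\tau)$ from the PSD assumption, which is what simultaneously bounds the eigenvalues by $1$ (for part (i)) and justifies the reduction. There is no substantive obstacle.
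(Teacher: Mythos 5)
Your proof is correct and follows essentially the same route as the paper: you introduce $M_i = \tau_i B(\tau)^{-1/2} B_i B(\tau)^{-1/2}$ (the paper calls it $Z_i$), deduce $0 \preceq M_i \preceq I_n$ from $\tau_i B_i \preceq B(\tau)$, and then read off all three claims. The only cosmetic difference is in part (iii): the paper bounds $\la B_i x, x\ra = \|B_i^{1/2} B(\tau)^{-1/2}\, B(\tau)^{1/2}x\|_2^2$ via the Frobenius-norm inequality $\|Mv\|_2 \leq \|M\|_F \|v\|_2$, whereas you pass through $\lambda_{\max}(M_i) \leq \Tr(M_i)$; both are the same elementary estimate on a PSD quadratic form, just packaged differently.
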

\begin{proof}
Since all $B_i \succeq 0$, we have $\tau_i B_i \preceq B(\tau)$ and therefore the matrix
\[ Z_i = \tau_i B(\tau)^{-\frac{1}{2}} B_i B(\tau)^{-\frac{1}{2}} \]
satisfies $0 \preceq Z_i \preceq I_n$. This implies 
$0\leq \Tr\left[ Z_i\right] \leq {\rm rank}(Z_i) = {\rm rank}(B_i)$,
proving the Item (i) since $\ell_i(\tau; \mathcal{B}) = \Tr\left[Z_i\right]$.
Item (ii) follows from the equality 
\[\sum_{i=1}^m \Tr\left[ \tau_i B(\tau)^{-1} B_i\right] = \Tr\left[ B(\tau)^{-1}B(\tau) \right] = n.\]
Finally, to prove (iii) we write
\BEQ\begin{split}
    \la B_i x, x \ra &= \| B_i^{\frac{1}{2}} x \|_2^2 = \| B_i^{\frac{1}{2}} B(\tau)^{-\frac{1}{2}} B(\tau)^{\frac{1}{2}} x \|_2^2 \\
    &\leq \| B_i^{\frac{1}{2}} B(\tau)^{-\frac{1}{2}}\|^2_F \, \| B(\tau)^{\frac{1}{2}} x \|_2^2
    = \Tr\left[ B(\tau)^{-\frac{1}{2}} B_i B(\tau)^{-\frac{1}{2}} \right] \la Bx, x \ra\\
    &= \tau_i^{-1} \ell_i(\tau; \mathcal{B}) \,\|x\|_{B(\tau)}^2.
\end{split}
\EEQ
\end{proof}

 Using the quantities $\ell_i(\tau; \mathcal{B})$, we can compare $\|x\|^2_{B(\tau)}$ with $W_{\mathcal{B},p}(x)$.

\begin{proposition}\label{prop:levg_cond}

For any $\tau \in \reals^m_{++}$ and $x \in \reals^n$ we have
\BEQ \label{eq:cond_ineq}
  \left( \frac{1}{{\|\tau\|_{q}}} \right) \| x\|^2_{B(\tau)} \leq W_{\mathcal{B},p}(x)^{\frac{1}{p}} \leq \left( \max_{i=1\dots m} \frac{ \ell_i(\tau; \mathcal{B})^{ \frac{1}{ q} } }{ {\tau_i} } \right) \|x\|^2_{B(\tau)}.
\EEQ
\end{proposition}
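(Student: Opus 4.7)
The plan is to introduce the shorthand $a_i \triangleq \la B_i x, x\ra \geq 0$, so that
\[
\|x\|^2_{B(\tau)} = \sum_{i=1}^m \tau_i a_i, \qquad W_{\mathcal{B},p}(x) = \sum_{i=1}^m a_i^p.
\]
With this notation, both inequalities become estimates relating the three norms $\sum_i \tau_i a_i$, $(\sum_i a_i^p)^{1/p}$, and the scalars $\ell_i(\tau;\mathcal{B})$.

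For the lower bound in \eqref{eq:cond_ineq}, I would simply apply Hölder's inequality to the pair of exponents $(p,q)$:
\[
\|x\|^2_{B(\tau)} \;=\; \sum_{i=1}^m \tau_i \, a_i \;\leq\; \Big(\sum_{i=1}^m \tau_i^q\Big)^{1/q} \Big(\sum_{i=1}^m a_i^p\Big)^{1/p} \;=\; \|\tau\|_q \, W_{\mathcal{B},p}(x)^{1/p},
\]
which rearranges to the left inequality.

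For the upper bound, the key input is part \ref{elem:l3} of Lemma \ref{lemma:levg_score}, which gives the pointwise bound $a_i \leq \frac{\ell_i(\tau;\mathcal{B})}{\tau_i}\,\|x\|^2_{B(\tau)}$ for every $i$. I would raise this bound to the power $p-1$ and multiply by $a_i$ to get
\[
a_i^p \;\leq\; a_i \,\Big(\frac{\ell_i(\tau;\mathcal{B})}{\tau_i}\Big)^{p-1}\,\|x\|_{B(\tau)}^{2(p-1)}.
\]
Summing over $i$ and factoring gives
\[
W_{\mathcal{B},p}(x) \;\leq\; \|x\|_{B(\tau)}^{2(p-1)} \sum_{i=1}^m \tau_i a_i \, \frac{\ell_i(\tau;\mathcal{B})^{p-1}}{\tau_i^p} \;\leq\; \Big(\max_{i}\frac{\ell_i(\tau;\mathcal{B})^{p-1}}{\tau_i^p}\Big)\, \|x\|_{B(\tau)}^{2p},
\]
where the last step pulls the maximum out of $\sum_i \tau_i a_i = \|x\|^2_{B(\tau)}$. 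Taking $p$-th roots and using $(p-1)/p = 1/q$ yields exactly the right-hand inequality in \eqref{eq:cond_ineq}.

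Neither step is an obstacle; the only thing to double-check is the algebraic identity $p/q = p-1$ (equivalently $(p-1)/p = 1/q$) that matches the exponent $\ell_i^{1/q}$ appearing in the statement, together with the hypothesis $\tau \in \reals^m_{++}$ so that $B(\tau)\succ 0$ and Lemma \ref{lemma:levg_score} is applicable. No other results beyond Hölder and Lemma \ref{lemma:levg_score}(iii) are required.
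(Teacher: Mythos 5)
Your proof is correct and follows essentially the same route as the paper's: Hölder's inequality with conjugate exponents $(p,q)$ for the lower bound, and Lemma~\ref{lemma:levg_score}\ref{elem:l3} combined with pulling out a maximum over $i$ for the upper bound. The only cosmetic difference is the order of operations in the upper bound (you apply the leverage bound before the algebraic split, the paper does it after), but the two computations are identical term by term.
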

\begin{proof}
    The lower bound follows from Hölder's inequality, as $\frac{1}{p}+\frac{1}{q}= 1$:
    \[\|x\|_{B(\tau)}^2 = \sum_{i=1}^m \tau_i \la x,B_i x\ra \leq \left( \sum_{i=1}^m \tau_i^q \right)^{\frac{1}{q}} \left( \sum_{i=1}^m \la x,B_i x\ra^{p} \right)^{\frac{1}{p}} = \|\tau\|_q\, W_{\mathcal{B},p}(x)^\frac{1}{p}. \]
    For the upper bound, we write
    \BEQ \label{eq:bound_axp}
    \begin{split}
    W_{\mathcal{B},p}(x) &= \sum_{i=1}^m \la x,B_i x \ra^{p} = \sum_{i=1}^m \frac{ \left( \tau_i \la x,B_i x \ra \right)^{p-1} }{\tau_i^p}\, \tau_i \la x,B_ix \ra    \\
     &\leq \left(\max_{i=1\dots m} \frac{ \left( \tau_i \la x,B_ix\ra \right)^{p-1} }{\tau_i^p} \right) \|x\|^2_{B(\tau)} 
     \leq \left(\max_{i=1\dots m} \frac{ \ell_i(\tau; \mathcal{B})^{p-1} }{\tau_i^p} \right) \|x\|^{2p}_{B(\tau)},
    \end{split}
    \EEQ
    where the last inequality follows from Lemma \ref{lemma:levg_score}\ref{elem:l3}.
\end{proof}
\subsubsection{Existence of $n^{\frac{p-1}{p}}$-quadratic approximation} Proposition \ref{prop:levg_cond} gives an upper bound for the condition number of $W_{\mathcal{B},p}^{1/p}$ with respect to $\|\cdot\|_{B(\tau)}$. We now seek for the d coefficients $\tau$ that minimize this upper bound. 
Since the condition number is invariant by rescaling of $\tau$, we can choose to impose the condition $\|\tau \|_q=1$.

Let us show that the factor in the right-hand side of \eqref{eq:cond_ineq} is at least $n^{\frac{1}{q}}$.
\begin{proposition} For every $\tau \in \reals^{m}_{++}$ such that $\|\tau\|_q = 1$, we have
\[ \max_{i=1\dots m} \frac{ \ell_i(\tau; \mathcal{B})^{ \frac{1}{ q} } }{ {\tau_i} } \geq n^{\frac{1}{q}}.\]
\end{proposition}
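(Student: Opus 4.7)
The plan is to use the identity $\sum_{i=1}^m \ell_i(\tau;\mathcal{B}) = n$ established in Lemma \ref{lemma:levg_score}(ii), combined with the normalization $\|\tau\|_q = 1$. Denote the quantity on the left-hand side by
\[
M \;\triangleq\; \max_{i=1,\dots,m} \frac{\ell_i(\tau;\mathcal{B})^{1/q}}{\tau_i}.
\]
By definition of $M$, we have $\ell_i(\tau;\mathcal{B})^{1/q} \leq M\, \tau_i$ for every $i$. Raising this pointwise inequality to the power $q$ (which is legitimate since both sides are nonnegative) gives $\ell_i(\tau;\mathcal{B}) \leq M^q\, \tau_i^q$ for each $i$.

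Now I would simply sum this inequality over $i = 1,\dots,m$ and apply the two known identities: on the left, Lemma \ref{lemma:levg_score}(ii) gives $\sum_{i=1}^m \ell_i(\tau;\mathcal{B}) = n$; on the right, the normalization $\|\tau\|_q = 1$ gives $\sum_{i=1}^m \tau_i^q = 1$. This yields
\[
n \;=\; \sum_{i=1}^m \ell_i(\tau;\mathcal{B}) \;\leq\; M^q \sum_{i=1}^m \tau_i^q \;=\; M^q,
\]
and taking $q$-th roots concludes $M \geq n^{1/q}$, which is the desired bound.

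There is no real obstacle here: the proposition is essentially a repackaging of the two basic facts about $\ell_i$, namely that they sum to $n$ and that the constraint $\|\tau\|_q=1$ is dual (in the Hölder sense) to the power $p-1$ appearing through $q$. The content of the statement is that the upper bound produced in Proposition \ref{prop:levg_cond} cannot be improved below $n^{1/q}$ by any choice of weights $\tau$, so this lower bound exactly matches the target approximation ratio $n^{(p-1)/p} = n^{1/q}$ announced for the optimal preconditioner, motivating the existence/fixed-point construction to follow.
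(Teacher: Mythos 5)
Your proof is correct and uses essentially the same argument as the paper: both rest on Lemma \ref{lemma:levg_score}(ii) (that the $\ell_i$ sum to $n$) together with the normalization $\|\tau\|_q=1$. The paper phrases it as a proof by contradiction, while you give the equivalent direct averaging argument; the content is identical.
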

\begin{proof}
    If, on the contrary, we have $\ell_i(\tau; \mathcal{B}) < n \tau_i^q$ for every $i \in \{1\dots m\}$, we reach a contradiction as $\sum_{i=1}^m \ell_i(\tau; \mathcal{B}) = n$ by Lemma \ref{lemma:levg_score}\ref{elem:l2} and $\sum_{i=1}^m \tau_i^q = 1$.
\end{proof}

It appears that there is a unique vector $\tau^*$ which attains this minimal value.

\begin{proposition}\label{prop:existence_bstar}
    There exists a unique vector $\tau^* \in \reals^m_{++}$ such that
    \BEQ\label{eq:taustar_eq} \frac{ \ell_i(\tau^*; \mathcal{B})^{ \frac{1}{ q} } }{ {\tau_i^*} } = n^{\frac{1}{q}}, \quad i = 1\dots m.\EEQ
    This vector satisfies $\|\tau^*\|_q = 1$. It is a unique solution to the minimization problem
    \BEQ\label{eq:v_exact}
        \min_{ \tau \in \reals^m } V(\tau) = - \ln \det B(\tau) + \frac{n}{q} \|\tau\|_q^q.
    \EEQ
\end{proposition}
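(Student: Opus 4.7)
The plan is to identify \eqref{eq:taustar_eq} with the first-order optimality condition of the convex program \eqref{eq:v_exact}, and to derive existence and uniqueness of $\tau^*$ from strict convexity and coercivity of $V$. Using the standard identity $\partial_{\tau_i}\ln\det B(\tau) = \Tr[B(\tau)^{-1}B_i]$ together with $\partial_{\tau_i}(\tau_i^q/q) = \tau_i^{q-1}$, the gradient on $\reals^m_{++}$ is
\[
    \frac{\partial V}{\partial \tau_i}(\tau) \;=\; -\Tr[B(\tau)^{-1}B_i] + n\,\tau_i^{q-1}.
\]
Setting this to zero and multiplying by $\tau_i$ yields $\ell_i(\tau;\mathcal{B}) = n\,\tau_i^q$, which rearranges exactly to \eqref{eq:taustar_eq}. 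Summing these identities over $i$ and invoking Lemma~\ref{lemma:levg_score}\ref{elem:l2} gives $n = n\|\tau\|_q^q$, whence $\|\tau^*\|_q = 1$.

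For existence and uniqueness I would verify that $V$ is strictly convex and coercive on its effective domain $D = \{\tau \in \reals^m : B(\tau) \succ 0\}$, which is open and convex. The map $\tau \mapsto -\ln\det B(\tau)$ is convex as the composition of the convex function $-\ln\det$ on $\symm^n_{++}$ with a linear map, and $\tau \mapsto \frac{n}{q}\|\tau\|_q^q$ is strictly convex for $q > 1$, so $V$ is strictly convex on $D$. For coercivity, since each $B_i \succeq 0$ one has $B(\tau) \preceq \sum_i \tau_i^+\, B_i \preceq \|\tau\|_q\,\bar B$ pointwise on $D$, giving $-\ln\det B(\tau) \geq -n\ln\bigl(\lambda_{\max}(\bar B)\|\tau\|_q\bigr)$; combined with the $\frac{n}{q}\|\tau\|_q^q$ term, this forces $V(\tau)\to +\infty$ as $\|\tau\|_q \to \infty$, while $-\ln\det B(\tau)\to +\infty$ at the boundary of $D$. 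Lower semicontinuity then yields a unique global minimizer $\tau^*$.

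Finally, I would argue $\tau^* \in \reals^m_{++}$ by contradiction: $|t|^q$ is differentiable at $0$ with derivative $0$ for $q > 1$, so if $\tau^*_i = 0$ (and $\tau^* \in D$) then $\partial V/\partial \tau_i$ at $\tau^*$ equals $-\Tr[B(\tau^*)^{-1}B_i]$, which is strictly negative since $B_i \neq 0$ and $B(\tau^*)^{-1} \succ 0$, contradicting optimality; similarly, the optimality condition $n\,\tau_i|\tau_i|^{q-2} = \Tr[B(\tau)^{-1}B_i] \geq 0$ rules out $\tau^*_i < 0$. Hence $\tau^*$ lies in the open positive orthant, satisfies $\nabla V(\tau^*) = 0$, and therefore obeys \eqref{eq:taustar_eq} with $\|\tau^*\|_q = 1$. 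The main obstacle is obtaining coercivity without assuming linear independence of $(B_1,\dots,B_m)$, which is resolved by the matrix inequality $B(\tau) \preceq \|\tau\|_q\,\bar B$ above.
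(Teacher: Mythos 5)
Your proof is correct and follows essentially the same route as the paper: identify \eqref{eq:taustar_eq} with the stationarity condition of the strictly convex, coercive function $V$, deduce existence and uniqueness of the minimizer, and then show it lies in $\reals^m_{++}$ from the sign of $\Tr[B(\tau^*)^{-1}B_i]$. The only difference is that you spell out the coercivity bound $-\ln\det B(\tau)\geq -n\ln(\lambda_{\max}(\bar B)\|\tau\|_q)$, which the paper simply asserts.
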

\begin{proof}
    As $q>1$, the function $V$ is strictly convex and differentiable on the set
    \[ \mathcal{D} = \{\tau \in \reals^m\,:\, B(\tau) \succ 0\}. \]
    Since $V(\tau) \rightarrow \infty$ as $\tau \rightarrow \partial \mathcal{D}$ (the boundary of $\mathcal{D}$) or $\|\tau\|_q \rightarrow \infty$, Problem \eqref{eq:v_exact} admits a unique minimizer $\tau^*$, which belongs to $\mathcal{D}$.

    As the gradient of function $B \mapsto \ln\det B$ is $B^{-1}$, the first-order optimality conditions write
    \BEQ\label{eq:LHS} \Tr \left[B(\tau^*)^{-1} B_i \right] = n \,{\rm sign}(\tau_i^*) |\tau^*_i|^{q-1}, \quad i = 1\dots m. \EEQ
    As we assumed the $B_i$'s to be nonzero and $\tau^* \in \mathcal{D}$, we have $\Tr \left[B(\tau^*)^{-1} B_i \right] > 0$ and therefore $\tau^*_i > 0$ for every $1\leq i \leq m$. Now, as the left-hand side of \eqref{eq:LHS} is $\ell_i(\tau^*; \mathcal{B})/\tau_i^*$, it follows that
    \BEQ\label{eq:tau_eq}
        \ell_i(\tau^*; \mathcal{B}) = n (\tau_i^*)^q,    \quad i = 1\dots m,
    \EEQ
    which proves \eqref{eq:taustar_eq}. Summing up these equations and using Lemma \ref{lemma:levg_score}\ref{elem:l2}, we get
    \[\|\tau^*\|_q^q = \frac{1}{n}\sum_{i=1}^n \ell_i(\tau^*; \mathcal{B}) = 1.\]
\end{proof}

In the case $r=1$, vector $\tau^*$ corresponds exactly to the \textit{Lewis weights} \citep{Lewis1978FiniteDS,Cohen2015lewis} used in $\ell_p$ subspace sampling. We allow operators $B_1,\dots B_m$ to have a higher rank, and show that the same construction can be used for building an efficient quadratic approximation for $W_{\mathcal{B},p}^{1/p}(\cdot)$.
We summarize the discussion in the following theorem.
\begin{theorem}[Optimal quadratic approximation of $W_{\mathcal{B},p}$]\label{thm:cond_taustar}
    For every $\mathcal{B} = (B_1,\dots B_m)$ satisfying Assumption \ref{ass:b}, there exists a matrix ${B_*\in \symm^n_{++}}$ such that for every $x \in \reals^n$
     \BEQ \label{eq:cond_bstar}
    \| x\|^2_{B_*} \leq \left(\sum_{i=1}^m\la B_i x, x\ra^{p} \right)^{\frac{1}{p}} \leq n^{1-\frac{1}{p}} \|x\|^2_{B_*}
    \EEQ 
\end{theorem}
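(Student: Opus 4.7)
The plan is to apply Proposition \ref{prop:levg_cond} at the specific vector $\tau^*$ whose existence is guaranteed by Proposition \ref{prop:existence_bstar}, and to observe that the two factors appearing in the chain \eqref{eq:cond_ineq} collapse to $1$ and $n^{1/q} = n^{1-1/p}$ respectively. All the hard analytic work has already been done; this is essentially a matter of assembling the pieces.

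Concretely, I would set $B_* \triangleq B(\tau^*) = \sum_{i=1}^m \tau_i^* B_i$, where $\tau^*$ is the minimizer of the potential \eqref{eq:v_exact} from Proposition \ref{prop:existence_bstar}. Assumption \ref{ass:b} guarantees $\tau^* \in \reals^m_{++}$ and $\sum_i B_i \succ 0$, whence $B_* \succ 0$, so $\|\cdot\|_{B_*}$ is indeed a Euclidean norm. Next I would instantiate the two-sided bound of Proposition \ref{prop:levg_cond} at $\tau = \tau^*$: the left-hand factor becomes $1/\|\tau^*\|_q = 1$ because Proposition \ref{prop:existence_bstar} states $\|\tau^*\|_q = 1$, which gives the lower bound $\|x\|_{B_*}^2 \leq W_{\mathcal{B},p}(x)^{1/p}$. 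For the right-hand factor, I would substitute the characterization \eqref{eq:taustar_eq}, namely $\ell_i(\tau^*;\mathcal{B})^{1/q}/\tau_i^* = n^{1/q}$ for every $i$, so the maximum over $i$ equals $n^{1/q}$.

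Combining these two observations yields
\[
\|x\|_{B_*}^2 \;\leq\; W_{\mathcal{B},p}(x)^{1/p} \;\leq\; n^{1/q}\,\|x\|_{B_*}^2,
\]
and using the conjugate-exponent identity $1/q = 1 - 1/p$ rewrites the upper constant as $n^{1-1/p}$, matching \eqref{eq:cond_bstar}.

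There is no real obstacle here: the substantive content is already packaged in Propositions \ref{prop:levg_cond} and \ref{prop:existence_bstar}. The only minor point worth flagging in the write-up is to confirm that the rescaling normalization $\|\tau^*\|_q = 1$ is consistent with the fixed-point system \eqref{eq:tau_eq} (which it is, by summing \eqref{eq:tau_eq} over $i$ and invoking Lemma \ref{lemma:levg_score}\ref{elem:l2}), so that both bounds in \eqref{eq:cond_ineq} are tightened simultaneously by the same $\tau^*$ rather than requiring two different normalizations.
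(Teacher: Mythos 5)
Your proposal is correct and coincides with the paper's own proof: the paper likewise sets $B_* = B(\tau^*)$ and invokes Proposition \ref{prop:levg_cond}, with the normalization $\|\tau^*\|_q = 1$ and the fixed-point identity \eqref{eq:taustar_eq} collapsing the two factors to $1$ and $n^{1-1/p}$. You have simply written out explicitly what the paper leaves as a one-line deduction.
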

\begin{proof}
    This follows by taking $B_*= B(\tau^*)$, where $\tau^*$ is given by Proposition \ref{prop:existence_bstar}, and applying Proposition \ref{prop:levg_cond}.
\end{proof}

\subsubsection{Optimality for general $\mathcal{B}$} One can ask if the constant $n^{1-1/p}$ in \eqref{eq:cond_bstar} is optimal. This is true for a \textit{generic} $\mathcal{B}$, although it might be better for specific choices. 

Indeed, let us choose $m = n$ and $B_i = e_i e_i^T$ for $i = 1\dots n$, where $\{e_i\}_{1\leq n}$ denote the coordinate vectors of $\reals^n$. We have
\[ W_{\mathcal{B},p}(x) = \sum_{i=1}^n x_i^{2p} = \|x\|_{2p}^{2p}. \]
A symmetry argument implies that the best quadratic approximations $\la B\cdot, \cdot \ra$ are isotropic, i.e. $B \propto I_n$. For such $B$, the approximation ratio is $n^{1-1/p}$ as we have
\[ n^{\frac{1}{p}-1}\|x\|_2^2 \leq \|x\|^{2}_{2p}  \leq \|x\|_{2}^2, \]
which is tight (take $x = (1\dots 1)$ for matching the lower bound and $x = e_1$ for the upper bound). This shows that the bound \eqref{eq:cond_bstar} is not improvable for general $\mathcal{B}$.

\subsection{Fixed-point algorithm for computing the optimal weights}
\label{ss:algo_fp} 
We now show how the coefficients $\tau^*$ can be computed using a fixed point algorithm when $p<2$.  This process converges only for $p < 2$, but we will show in Section \ref{sss:p2} that we can also deal with the case $p=2$ (which corresponds to the quartic applications) by using the fixed-point method with a surrogate power $p' < 2$.

\subsubsection{Case $p < 2$}

The equations \eqref{eq:taustar_eq}, which characterize $\tau^*$, are as follows:
\BEQ\label{eq:fixed_point}
    \ell_i(\tau_i;\mathcal{B}) = n \tau_i^{\frac{p}{p-1}}, \quad i = 1\dots m.
\EEQ
By definition \eqref{eq:levg_def} of $\ell_i$, \eqref{eq:fixed_point} can be written equivalently as the fixed-point equation
\[ \tau_i = \left( \frac{1}{n} \Tr\left[B(\tau)^{-1}B_i \right] \right)^{p-1},\quad i = 1 \dots m. \]
Based on this observation, we propose in Algorithm \ref{algo:fp} a fixed-point method for approximating $\tau^*$. This method was proposed initially by \citep{Cohen2015lewis} for computing the Lewis weights with $r=1$, and we generalize it to possibly larger rank $r$.

\begin{algorithm}[h!]
\begin{spacing}{1.6}
\caption{Computing approximate optimal weights of order $p \in (1,2)$}
\label{algo:fp}
\begin{algorithmic}[1]
\STATE \textbf{Input} $p\in (1,2)$, $\mathcal{B} = (B_1,\dots B_m)$, precision $\epsilon>0$.
\STATE \textbf{Initialize} $\tau^{(0)} = (m^{\frac{1}{p}-1},\dots m^{\frac{1}{p}-1})$.
\STATE \textbf{for} $k = 0,1,\dots$ do\\
\vspace{1ex}
\STATE \quad  $\tau^{(k+1)}_i = \left( \frac{1}{n} \Tr\left[ \left( \sum_{j=1}^m \tau^{(k)}_j B_j\right)^{-1}B_i \right] \right)^{p-1} \quad \text{for } i = 1\dots m,$
\vspace{1ex}
\STATE \textbf{until} $D_L(\tau^{(k+1)}, \tau^{(k)}) \leq {2-p \over p-1} \epsilon$.
\end{algorithmic}
\end{spacing}
\end{algorithm}

\paragraph{Convergence analysis of Algorithm \ref{algo:fp}.} Define the fixed-point operator $T_p$ as 
\BEQ\label{eq:def_T}
    \left[T_p(\tau)\right]_i =\left( \frac{1}{n} \Tr \left[B(\tau)^{-1}B_i \right] \right)^{p-1}, \quad \tau\in\reals^m_{++}, \quad i = 1\dots m.
\EEQ
Note that if $\tau >0$, then $B(\tau)$ is positive definite and $T_p(\tau) >0$. Therefore, $T_p$ is a map from $\reals^m_{++}$ to itself.
We define the log-infinity distance $\dlog(u,v)$ for $u,v \in \reals^m_{++}$:
\[ \dlog(u,v) = \max_{i=1\dots m} |\ln u_i - \ln v_i|.\]
We show that, for $p<2$, $T_p$ is a contraction with respect to the distance $\dlog$.
\begin{lemma} \label{lemma:contraction}
For every $\tau,\tilde{\tau}\in\reals^m_{++}$ we have
$\dlog\Big( T_p(\tau), T_p(\tilde{\tau}) \Big) \leq (p-1)\, \dlog(\tau,\tilde{\tau}).$
\end{lemma}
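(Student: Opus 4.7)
The plan is to reduce the contraction estimate to a monotonicity argument on positive semidefinite matrices, exploiting the fact that $B(\tau)=\sum_{j=1}^m \tau_j B_j$ is linear in $\tau$ with PSD coefficients. Set $d = D_L(\tau,\tilde\tau)$, so by definition $e^{-d}\tilde\tau_j \leq \tau_j \leq e^d \tilde\tau_j$ for every $j$. Since each $B_j \succeq 0$, these componentwise bounds lift to the matrix ordering
\[ e^{-d}\, B(\tilde\tau) \;\preceq\; B(\tau) \;\preceq\; e^d\, B(\tilde\tau). \]

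Next I would invert. Under Assumption \ref{ass:b} and with $\tau,\tilde\tau \in \reals^m_{++}$, both $B(\tau)$ and $B(\tilde\tau)$ are positive definite, and inversion reverses the Loewner order, so
\[ e^{-d}\, B(\tilde\tau)^{-1} \;\preceq\; B(\tau)^{-1} \;\preceq\; e^d\, B(\tilde\tau)^{-1}. \]
Pairing with $B_i \succeq 0$ and taking the trace preserves the inequality, yielding
\[ e^{-d}\,\Tr[B(\tilde\tau)^{-1} B_i] \;\leq\; \Tr[B(\tau)^{-1} B_i] \;\leq\; e^d\,\Tr[B(\tilde\tau)^{-1} B_i]. \]
Note that these traces are strictly positive because each $B_i$ is nonzero PSD and $B(\tau)^{-1}\succ 0$, so the logarithms below are well defined. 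The displayed bound says precisely that $\tau \mapsto \ln \Tr[B(\tau)^{-1} B_i]$ is $1$-Lipschitz in $D_L$, uniformly in $i$.

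Finally, by definition of $T_p$ in \eqref{eq:def_T} we have
\[ \ln [T_p(\tau)]_i - \ln [T_p(\tilde\tau)]_i \;=\; (p-1)\Big(\ln \Tr[B(\tau)^{-1}B_i] - \ln \Tr[B(\tilde\tau)^{-1}B_i]\Big), \]
so raising to the power $p-1$ simply rescales log-distances by the factor $|p-1|=p-1$ (since $p>1$). Taking the maximum over $i$ gives $D_L(T_p(\tau),T_p(\tilde\tau)) \leq (p-1)\,d$, which is the claimed bound.

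There is no real obstacle here; the argument is essentially a one-line calculation once the correct ordering chain is set up. The only subtlety to state clearly is that PSD-ness of the $B_j$ is what allows the scalar $D_L$ bound to translate into a matrix sandwich, and PSD-ness of $B_i$ is what preserves it under the trace pairing. It is also worth remarking that the same derivation shows $T_p$ is a contraction on $(\reals^m_{++},D_L)$ precisely when $p<2$, which is the regime in which Algorithm \ref{algo:fp} is defined.
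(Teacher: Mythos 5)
Your proof is correct and follows essentially the same route as the paper's: bound $B(\tau)$ between $e^{\pm d} B(\tilde\tau)$ via PSD-ness of the $B_j$, invert to compare $B(\tau)^{-1}$ and $B(\tilde\tau)^{-1}$, pass to traces against $B_i$, and read off the factor $p-1$ from the power in the definition of $T_p$. You are merely more explicit than the paper about the intermediate sum and trace steps, which is harmless.
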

\begin{proof}    
    Let $\tau, \tilde{\tau} \in \reals^m_{++}$, and let $\Delta = \exp \left[ \dlog(\tau,\tilde{\tau})\right]$ so that
    \[ \frac{1}{\Delta} \tilde{\tau}_i \leq \tau_i \leq \Delta \tilde{\tau}_i, \quad i = 1\dots m. \]
    This implies that $\Delta^{-1} \tilde{\tau}_i B_i \preceq \tau_i B_i \preceq \Delta \tilde{\tau}_i B_i$ for every $i=1\dots m$ and therefore
    \[ \Delta^{-1} B(\tilde{\tau})^{-1} \preceq B(\tau)^{-1} \preceq \Delta B(\tilde{\tau})^{-1}.\]
    Using the expression \eqref{eq:def_T} for $T_p$ yields, recalling that $p>1$,
    \[ \Delta^{-(p-1)}\left[T_p(\tilde{\tau})\right]_i \,\leq\, \left[T_p(\tau)\right]_i\,\leq\,  \Delta^{p-1} \left[T_p(\tilde{\tau})\right]_i, \quad i = 1\dots m, \]
    which implies $\dlog\left( T_p(\tau), T_p(\tilde{\tau}) \right)  \leq (p-1) \ln \Delta$.
\end{proof}
The contraction property allows to establish the convergence rate to the fixed point.
\begin{proposition}\label{prop:convrate}
    Assume that $1<p<2$. The iterates of the fixed-point process 
    \BEQ
    \begin{cases}
      &\tau^{(0)} = \left(m^{\frac{1}{p}-1},\dots, m^{\frac{1}{p}-1}\right),\\
      &\tau^{(k+1)} = T_p(\tau^{(k)}),\quad  k = 0, 1 \dots
    \end{cases}
    \EEQ
    converge to the solution $\tau^*$ of \eqref{eq:taustar_eq} and satisfy for $k\geq 1$
    \BEQ
      \dlog\left( \tau^{(k)},\tau^{*} \right) \leq \left(1 - {1 \over p} \right)^{k+1} \ln(rm).
    \EEQ
\end{proposition}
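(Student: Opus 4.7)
The plan is a Banach-type fixed point argument: combine the contraction from Lemma \ref{lemma:contraction} with an explicit bound on the initial distance $\dlog(\tau^{(0)}, \tau^*)$, both measured in the log-infinity metric.

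First, I would verify that $\tau^*$ is a fixed point of the operator $T_p$ defined in \eqref{eq:def_T}. By Proposition \ref{prop:existence_bstar}, $\tau^*$ satisfies $\ell_i(\tau^*) = n(\tau_i^*)^q$; using $\ell_i(\tau^*) = \tau_i^* \Tr[B(\tau^*)^{-1} B_i]$, dividing by $n\tau_i^*$, and raising to the power $p-1$ (which uses the conjugacy identity $(p-1)(q-1) = 1$) yields $[T_p(\tau^*)]_i = \tau_i^*$. Applying Lemma \ref{lemma:contraction} inductively then gives the geometric decay
\[
\dlog(\tau^{(k)}, \tau^*) \;\leq\; (p-1)^{k}\, \dlog(\tau^{(0)}, \tau^*),
\]
which in particular guarantees convergence for $p \in (1,2)$.

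The second ingredient is to bound $\dlog(\tau^{(0)}, \tau^*)$. With the uniform initialization $\tau_i^{(0)} = m^{1/p-1} = m^{-1/q}$ and the characterization $\tau_i^* = (\ell_i^*/n)^{1/q}$ from Proposition \ref{prop:existence_bstar}, we have
\[
\ln\!\frac{\tau_i^*}{\tau_i^{(0)}} = \frac{1}{q}\ln\frac{m\,\ell_i^*}{n}.
\]
Lemma \ref{lemma:levg_score}\ref{elem:l1} gives $\ell_i^* \leq r$, and hence the upper side $\ln(\tau_i^*/\tau_i^{(0)}) \leq (1/q)\ln(rm)$ follows directly. The matching lower bound on $\ell_i^*$ is the main technical obstacle, since nothing in Lemma \ref{lemma:levg_score} directly forces $\ell_i^*$ away from zero; it is obtained by leveraging the total-mass identity $\sum_i \ell_i^* = n$ (Lemma \ref{lemma:levg_score}\ref{elem:l2}) in combination with the normalization $\|\tau^*\|_q = 1$, which together control $\ell_i^*$ from below by a quantity of order $1/(rm)$.

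Combining the two steps produces the advertised geometric rate $(1-1/p)^{k+1}\ln(rm)$. The improvement from the naive product $(p-1)^k \cdot (1/q)\ln(rm)$ to the sharper exponent $(1-1/p)^{k+1}$ hinges on a refined per-iteration analysis of $T_p$ near $\tau^*$, exploiting that both $\tau^{(0)}$ and $\tau^*$ lie on the sphere $\{\|\tau\|_q = 1\}$ (a direct computation gives $\|\tau^{(0)}\|_q = 1$). This sphere-preservation near the fixed point is the delicate part of the argument, since Lemma \ref{lemma:contraction} on its own gives the contraction constant $p-1$, which is strictly larger than $1-1/p$ for $p > 1$.
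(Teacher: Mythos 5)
Your first ingredient is sound: verifying that $\tau^*$ is a fixed point of $T_p$ via the conjugacy $(p-1)(q-1)=1$ and then applying Lemma~\ref{lemma:contraction} inductively is exactly the Banach contraction idea the paper uses. But your second ingredient has a genuine gap. You propose to bound $\dlog(\tau^{(0)},\tau^*)$ directly, which via $\tau_i^*=(\ell_i^*/n)^{1/q}$ reduces to a two-sided bound on $\ell_i^*=\ell_i(\tau^*;\mathcal{B})$. The upper bound $\ell_i^*\leq r$ is fine, but the lower bound you need does not exist: the total-mass identity $\sum_i\ell_i^*=n$ and the normalization $\|\tau^*\|_q=1$ constrain the aggregate, not individual coordinates, and $\ell_i^*$ can be made arbitrarily small for $\mathcal{B}$ satisfying Assumption~\ref{ass:b}. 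A minimal example is $n=r=1$, $m=2$, $B_1=1$, $B_2=\epsilon$: solving \eqref{eq:taustar_eq} gives $\tau_2^*\sim\epsilon^{p-1}$, hence $\ell_2^*\sim\epsilon^p\to 0$ and $\dlog(\tau^{(0)},\tau^*)\to\infty$ as $\epsilon\to 0$. This is precisely why the proposition is stated for $k\geq 1$ and not $k\geq 0$: the quantity $\dlog(\tau^{(0)},\tau^*)$ is simply unbounded over the assumption class, so no argument that starts from it can succeed.

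The paper's resolution is to bound $\dlog(\tau^{(1)},\tau^*)$ instead, and the mechanism is qualitatively different from what you suggest. Even when the individual weights $\tau_i^{(0)}$ and $\tau_i^*$ are incomparable, the \emph{matrices} $B(\tau^{(0)})$ and $B(\tau^*)$ are comparable: both yield a bounded approximation of $W_{\mathcal{B},p}^{1/p}$ (with ratios $(rm)^{1-1/p}$ and $n^{1-1/p}$, via Proposition~\ref{prop:levg_cond}, Lemma~\ref{lemma:levg_score}\ref{elem:l1}, and Theorem~\ref{thm:cond_taustar}), which gives $(rm)^{-(1-1/p)} B(\tau^*)\preceq B(\tau^{(0)})\preceq n^{1-1/p}B(\tau^*)$. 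Since $[T_p(\tau)]_i$ depends on $\tau$ only through the positive-definite matrix $B(\tau)$, and $\Tr[\,\cdot\, B_i]$ is monotone in the Loewner order, this immediately transfers to a two-sided bound on $\tau^{(1)}_i=[T_p(\tau^{(0)})]_i$ versus $\tau_i^*=[T_p(\tau^*)]_i$, yielding $\dlog(\tau^{(1)},\tau^*)\leq p^{-1}(p-1)^2\ln(rm)$. Your last paragraph, which attributes the exponent to a ``sphere-preservation'' refinement of the per-iteration contraction constant, is not how the argument works: the contraction constant is $p-1$ throughout, and the extra factor arises from the initial bound on $\dlog(\tau^{(1)},\tau^*)$ already carrying two powers of $(p-1)$ (because the matrix comparison gets raised to the power $p-1$ inside $T_p$, and the matrix comparison itself already involves the exponent $1-1/p$).
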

\begin{proof} As, by Lemma \ref{lemma:contraction}, $T_p$ is a contraction for $p < 2$, and $\tau^*$ is the unique vector satisfying $T_p(\tau^*) = \tau^*$, we can apply the Banach fixed point theorem in the space $(\reals^m_{++},\dlog)$, which proves convergence. Moreover, for $k\geq 1$, we have 
  \BEQ\label{eq:rate_abstr}\dlog\left( \tau^{(k)},\tau^{*} \right) \leq (p-1)^{k-1} \dlog\left( \tau^{(1)},\tau^* \right),
  \EEQ 
  To bound the distance of $\tau^{(1)}$ to the optimum, let us show that $B(\tau^{(0)})$ and $B(\tau^*)$ are comparable.
    Since $\|\tau^{(0)}\|_q = 1$, Proposition \ref{prop:levg_cond} as applied to $\tau^{(0)}$ yields
  \BEQ\label{eq:bound_b0ab}
  \| x\|^2_{B(\tau^{(0)})} \leq W_{\mathcal{B},p}(x)^{\frac{1}{p}} \leq \left( \max_{i=1\dots m} m\,\ell_i(\tau^{(0)}; \mathcal{B})  \right)^{1-\frac{1}{p}} \|x\|^2_{B(\tau^{(0)})}  \EEQ
  for every $x\in \reals^n$. By Lemma \ref{lemma:levg_score}\ref{elem:l1}, we have 
  \BEQ\label{eq:bound_mli}\max_{i=1\dots m} \,\ell_i(\tau^{(0)}; \mathcal{B}) \leq \max_{i=1\dots m} \mbox{rank}(B_i) = r.
  \EEQ
    Similarly, according to Theorem \ref{thm:cond_taustar}, $B(\tau^*)$ satisfies
  \BEQ\label{eq:bound_bsab}
      \| x\|^2_{B(\tau^*)} \leq W_{\mathcal{B},p}(x)^{\frac{1}{p}} \leq n^{1-\frac{1}{p}} \|x\|^2_{B(\tau^*)}.
  \EEQ
  Combining \eqref{eq:bound_b0ab}, \eqref{eq:bound_mli} and \eqref{eq:bound_bsab}, we get
  \BEQ
       (rm)^{-(1-\frac{1}{p})} B(\tau^*) \preceq B(\tau^{(0)}) \preceq n^{1-\frac{1}{p}} B(\tau^*).
  \EEQ
  This implies that for every $i = 1\dots m$,
  \[ n^{- \frac{(p-1)^2}{p} }\left[ T_p(\tau^{*})\right]_i \leq  \left[ T_p(\tau^{(0)})\right]_i \leq (rm)^{ \frac{(p-1)^2}{p} } \left[ T_p(\tau^*)\right]_i \]
  and therefore
  \[ \dlog\big( T_p(\tau^{(1)}), \tau^* \big)  \leq p^{-1}(p-1)^2 \ln \left[ \max\left(n, rm\right) \right] \leq  p^{-1} (p-1)^2 \ln(rm),\]
  as we assumed $m \geq n$. Combining with \eqref{eq:rate_abstr} achieves the result.

\end{proof}
 This result allows us to estimate the number of iterations needed to compute an approximate solution to \eqref{eq:taustar_eq}, and therefore the complexity of Algorithm \ref{algo:fp}.
\begin{proposition}\label{prop:convrate_algo_stop}
    Assume that $1<p < 2$ and let $\epsilon>0$. Then, Algorithm \ref{algo:fp} with precision $\epsilon$ performs at most
\[ {p-1 \over 2-p} \left[ \ln {\ln(rm) \over (2-p)\epsilon} \right]_+    
\]
    iterations, and its output $\overline{\tau}$ satisfies
    \BEQ\label{eq:Capprox}
    e^{-\epsilon}\,  \|x\|^2_{B(\overline{\tau})} \leq \left(\sum_{i=1}^m\la B_i x, x\ra^{p} \right)^{\frac{1}{p}} \leq e^{\epsilon}\,n^{1-\frac{1}{p}} \|x\|^2_{B(\overline{\tau})}.
    \EEQ
\end{proposition}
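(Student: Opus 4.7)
The plan is to separate the statement into two claims: (i) the $\epsilon$-approximation bound \eqref{eq:Capprox} once the stopping criterion is met, and (ii) the iteration count. The first claim is the easier part and rests entirely on the already-established optimal bound of Theorem \ref{thm:cond_taustar} together with a perturbation argument in $D_L$; the second requires iterating the contraction from Lemma \ref{lemma:contraction}.

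First, I would show that if the algorithm terminates with output $\overline{\tau}=\tau^{(k+1)}$, then $D_L(\overline{\tau},\tau^*)\le\epsilon$. Applying Lemma \ref{lemma:contraction} with $\tau^*$ as fixed point gives $D_L(\tau^{(k+1)},\tau^*)\le(p-1)\,D_L(\tau^{(k)},\tau^*)$. Using the triangle inequality,
\[
D_L(\tau^{(k)},\tau^*)\le D_L(\tau^{(k)},\tau^{(k+1)})+D_L(\tau^{(k+1)},\tau^*)\le D_L(\tau^{(k)},\tau^{(k+1)})+(p-1)\,D_L(\tau^{(k)},\tau^*),
\]
so $D_L(\tau^{(k)},\tau^*)\le\frac{1}{2-p}D_L(\tau^{(k)},\tau^{(k+1)})$ and therefore $D_L(\overline{\tau},\tau^*)\le\frac{p-1}{2-p}D_L(\tau^{(k)},\tau^{(k+1)})\le\epsilon$ by the stopping rule.

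Second, I would translate this closeness into \eqref{eq:Capprox}. By definition of $D_L$, $|\ln\overline{\tau}_i-\ln\tau^*_i|\le\epsilon$ means $e^{-\epsilon}\tau_i^*\le\overline{\tau}_i\le e^\epsilon\tau_i^*$. Since every $B_i$ is PSD and $B(\tau)$ is linear in $\tau$, this yields $e^{-\epsilon}B(\tau^*)\preceq B(\overline{\tau})\preceq e^\epsilon B(\tau^*)$, and hence the same two-sided bound between $\|x\|^2_{B(\overline{\tau})}$ and $\|x\|^2_{B(\tau^*)}$. Plugging into Theorem \ref{thm:cond_taustar} gives \eqref{eq:Capprox} directly.

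For the iteration count, I would iterate the contraction: $D_L(\tau^{(k+1)},\tau^{(k)})=D_L(T_p\tau^{(k)},T_p\tau^{(k-1)})\le(p-1)\,D_L(\tau^{(k)},\tau^{(k-1)})$, giving $D_L(\tau^{(k+1)},\tau^{(k)})\le(p-1)^k D_L(\tau^{(1)},\tau^{(0)})$. A bound $D_L(\tau^{(1)},\tau^{(0)})\le C\ln(rm)$ (for some constant depending on $p$) follows by combining the explicit a priori estimates derived inside the proof of Proposition \ref{prop:convrate} (the sandwich $(rm)^{-(p-1)/p}B(\tau^*)\preceq B(\tau^{(0)})\preceq n^{(p-1)/p}B(\tau^*)$) with the triangle inequality, since both $\tau^*$ and $\tau^{(0)}$ live in a compact set of $D_L$-balls of radius $O(\ln(rm))$ around each other. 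Then the stopping condition is met as soon as $(p-1)^k\cdot C\ln(rm)\le\frac{2-p}{p-1}\epsilon$, which upon taking logarithms gives $k\le\frac{1}{|\ln(p-1)|}\ln\frac{C(p-1)\ln(rm)}{(2-p)\epsilon}$.

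The main obstacle is obtaining the \emph{exact} prefactor $\frac{p-1}{2-p}$ stated in the proposition rather than the loose $\frac{1}{|\ln(p-1)|}$ that the naive analysis produces: this requires sharper control of $|\ln(p-1)|$ using a tailored elementary inequality of the form $|\ln(p-1)|\ge\frac{2-p}{p-1}$ valid on the regime of interest, or alternatively a refined use of the contraction that exploits the specific symmetric initialization $\tau^{(0)}_i=m^{1/p-1}$; in either case the logarithmic factor becomes $\ln\frac{\ln(rm)}{(2-p)\epsilon}$ after absorbing the $(p-1)$ factor inside the argument of the logarithm up to the $[\cdot]_+$ truncation.
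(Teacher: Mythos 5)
Your first two steps (deriving $\dlog(\overline{\tau},\tau^*)\le\epsilon$ from the stopping rule via the triangle inequality plus the contraction from Lemma \ref{lemma:contraction}, and then passing to $e^{-\epsilon}B(\tau^*)\preceq B(\overline{\tau})\preceq e^{\epsilon}B(\tau^*)$ and invoking Theorem \ref{thm:cond_taustar}) are correct and are essentially identical to the paper's argument for \eqref{eq:Capprox}; the only cosmetic difference is in which iterate is called $\overline\tau$.

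The gap is in the iteration-count part, and in a somewhat ironic way: your ``naive'' analysis is actually the correct one, and the ``refinement'' you propose rests on a false inequality. You suggest using $|\ln(p-1)|\ge\frac{2-p}{p-1}$ to upgrade the prefactor $\frac{1}{|\ln(p-1)|}$ to $\frac{p-1}{2-p}$, but the elementary inequality $\ln u \le u-1$ with $u=\frac{1}{p-1}$ gives the \emph{reverse}: $|\ln(p-1)|=\ln\frac{1}{p-1}\le\frac{1}{p-1}-1=\frac{2-p}{p-1}$. The inequality that actually holds in the useful direction is $|\ln(p-1)|=\ln\frac{1}{p-1}\ge 1-(p-1)=2-p$ (from $\ln u\ge 1-1/u$), and plugging this into your naive bound gives a prefactor $\frac{1}{2-p}$, not $\frac{p-1}{2-p}$. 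In fact, the paper's own displayed chain $\frac{\ln(rm)}{(2-p)\epsilon}\le\left(\frac{1}{p-1}\right)^{k+1}\le\exp\left\{\frac{(k+1)(2-p)}{p-1}\right\}$ rearranges to a \emph{lower} bound $k+1\ge\frac{p-1}{2-p}\ln\frac{\ln(rm)}{(2-p)\epsilon}$, which cannot justify an ``at most'' statement; and the proof of Corollary \ref{cor:pequal2} uses $\frac{1}{2-\overp}$ rather than $\frac{\overp-1}{2-\overp}$, which corroborates that the intended prefactor is $\frac{1}{2-p}$. So the right conclusion is: keep your naive bound, sharpen $|\ln(p-1)|\ge 2-p$, get $\frac{1}{2-p}\left[\ln\frac{\ln(rm)}{(2-p)\epsilon}\right]_+$, and do not try to produce the $(p-1)$ factor.
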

\begin{proof}
  By Proposition \ref{prop:convrate}, we can bound the distance between successive iterates:
  \[ \dlog\left( \tau^{(k)},\tau^{(k-1)} \right)  \leq \dlog\left( \tau^{(k)},\tau^{*} \right) + \dlog\left( \tau^{(k-1)},\tau^{*} \right) \leq (p-1)^k \ln(rm). \]
  Since Algorithm \ref{algo:fp} stops as soon as $ \dlog\left( \tau^{(k)},\tau^{(k-1)} \right) \leq \frac{2-p}{p-1} \epsilon $, the number $k$ of the iterations performed satisfies the bound
$(p-1)^{k+1} \ln (rm) \leq (2-p) \epsilon$. Hence
\[
{\ln(rm) \over (2-p)\epsilon} \leq \left({1 \over p-1}\right)^{k+1} \leq \exp\left\{ {(k+1) (2-p)\over p-1} \right\}.
\]

  To analyse the approximation quality of the last iterate $\overline{\tau} = \tau^{(K)}$, we use the contraction property of $T_p$ and the stopping criterion:
  \BEQ\begin{split}
   \dlog\left( \tau^{(K)}, \tau^* \right) &\leq \dlog\left( \tau^{(K+1)}, \tau^{(K)} \right) + \dlog\left( \tau^{(K+1)}, \tau^* \right) \\
   & \leq (p-1) \dlog\left( \tau^{(K)}, \tau^{(K-1)} \right) + (p-1) \dlog\left( \tau^{(K)}, \tau^* \right)\\
   &\leq (2-p) \epsilon + (p-1) \dlog\left( \tau^{(K)}, \tau^* \right).
 \end{split}
   \EEQ
   Hence, $e^{-\epsilon} B(\tau^*) \preceq B(\tau^{(K)}) \preceq e^{\epsilon} B(\tau^*)$,
and using the property of $B(\tau^*)$ from Theorem \ref{thm:cond_taustar}, we get \eqref{eq:Capprox}.
\end{proof}

\paragraph{Computational complexity.} In practice, we usually have an access to a compact representation of $B_1\dots B_m$ as 
$B_i = U_i U_i^T$,  
where $U_i \in \reals^{n \times r}$ (in our examples, such representation is explicit). Then, the quantity $\ell_i(\tau)$ can be computed as
\[ \ell_i(\tau) = \tau_i \Tr\left[ U_i^T B(\tau)^{-1}U_i  \right],\]
which takes $\mathcal{O}(n^2 r)$ operations for each $i = 1\dots m$. Forming and inverting the matrix $B(\tau)$ requires $\mathcal{O}(mn^2)$ operations. Therefore, the total complexity of one iteration of Algorithm \ref{algo:fp} is $\mathcal{O}( mrn^2 )$, and we need to perform $\mathcal{O}\left(\ln (rm) / (2-p)\right)$ such iterations as we are satisfied with a constant factor approximation of the optimal preconditioner. 

\subsubsection{Quartic case ($p=2$)}\label{sss:p2}
\newcommand{\lnmn}{\ln\frac{m}{n}}
\newcommand{\overp}{p'}
The fixed-point method from the previous section converges only for $p < 2$. However, we are particularly interested in computing the weights $\tau^*$ for $p = 2$, which corresponds to preconditioning of quartic functions. In this case, let us show that a constant factor approximation of the optimal $\sqrt{n}$ ratio can be obtained by the previous method with a well-chosen surrogate power ${\overp} < 2$.

\begin{corollary}\label{cor:pequal2}
    Let $\omega > 1$. The Algorithm \ref{algo:fp} with precision $\epsilon > 0$ and power 
    \[
      \overp = \frac{2 \lnmn }{ \lnmn + 2 \ln \omega}
    \]
performs at most $\mathcal{O}\left( \left[ \ln \ln (rm) - \ln\ln \omega- \ln \epsilon \right]_+ \left( {\lnmn} / \ln \omega \right)  \right)$ iterations, and the output $\overline{\tau}$ satisfies
    \BEQ
    e^{-\epsilon} \left( \frac{1}{m^{ \frac{1}{\overp} - \frac{1}{2}}}\|x\|^2_{B(\overline{\tau})} \right) \leq \left(\sum_{i=1}^m\la B_i x, x\ra^{2} \right)^{\frac{1}{2}} \leq e^{\epsilon}\omega \, \sqrt{n}  \left( \frac{1}{m^{ \frac{1}{\overp} - \frac{1}{2}}} \|x\|^2_{B(\overline{\tau})} \right).
    \EEQ
\end{corollary}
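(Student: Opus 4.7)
The plan is to invoke Algorithm \ref{algo:fp} with a surrogate power $\overp \in (1,2)$ and then bridge the gap between $W_{\mathcal{B},\overp}^{1/\overp}$ and the target $W_{\mathcal{B},2}^{1/2}$ using standard monotonicity of $\ell_p$-norms on $\reals^m$. Setting $y \in \reals^m_+$ with $y_i = \la B_i x,x\ra$, we have $W_{\mathcal{B},p}(x)^{1/p} = \|y\|_p$, and the elementary inequality $\|y\|_2 \leq \|y\|_{\overp} \leq m^{1/\overp - 1/2}\|y\|_2$ (valid for $\overp \leq 2$) yields
\[
W_{\mathcal{B},2}(x)^{1/2} \;\leq\; W_{\mathcal{B},\overp}(x)^{1/\overp} \;\leq\; m^{1/\overp - 1/2}\, W_{\mathcal{B},2}(x)^{1/2}.
\]
Combining this sandwich with Proposition \ref{prop:convrate_algo_stop} applied at power $\overp$ produces a two-sided bound for $W_{\mathcal{B},2}(x)^{1/2}$ in terms of $\|x\|^2_{B(\overline{\tau})}$; what remains is to tune $\overp$ so that the resulting ratio matches $\omega\sqrt{n}$.

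Carrying this out, the multiplicative gap between the upper and lower bounds, after factoring out the common scaling $m^{-(1/\overp-1/2)}$, equals $e^{2\epsilon}\,n^{1-1/\overp}\, m^{1/\overp - 1/2} = e^{2\epsilon}\sqrt{n}\,(m/n)^{1/\overp - 1/2}$. Setting $(m/n)^{1/\overp - 1/2} = \omega$ yields $(1/\overp - 1/2)\lnmn = \ln\omega$, whose unique solution is precisely the prescribed $\overp = 2\lnmn/(\lnmn + 2\ln\omega)$. This $\overp$ belongs to $(1,2)$ as soon as $\lnmn > 2\ln\omega$; in the opposite regime $\omega^2 \geq m/n$, the naive choice $\bar B = \sum_i B_i$ already delivers a condition number at most $\sqrt{m} \leq \omega\sqrt{n}$ by Proposition \ref{prop:cond_number}, so that edge case can be handled separately.

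For the iteration count, substitute the chosen $\overp$ into the bound of Proposition \ref{prop:convrate_algo_stop}. A direct computation gives $2 - \overp = 4\ln\omega/(\lnmn + 2\ln\omega)$ and $\overp - 1 = (\lnmn - 2\ln\omega)/(\lnmn + 2\ln\omega)$, hence $(\overp - 1)/(2-\overp) = \mathcal{O}(\lnmn/\ln\omega)$. The secondary contribution $\ln(1/(2-\overp)) = \mathcal{O}(\ln\lnmn - \ln\ln\omega)$ is dominated by $\ln\ln(rm)$ since $rm \geq m/n$, yielding the advertised complexity $\mathcal{O}\bigl((\lnmn/\ln\omega)\cdot[\ln\ln(rm) - \ln\ln\omega - \ln\epsilon]_+\bigr)$.

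The main obstacle here is purely bookkeeping: matching constants exactly when solving for $\overp$, and verifying that the $\ln\lnmn$ factor in the iteration count is absorbed by the dominant $\ln\ln(rm)$. No new analytical ingredient is required beyond Proposition \ref{prop:convrate_algo_stop}, Theorem \ref{thm:cond_taustar}, and the monotonicity of $\ell_p$-norms on $\reals^m$.
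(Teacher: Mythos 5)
Your proposal is correct and follows essentially the same route as the paper: norm monotonicity on $\reals^m$ to sandwich $W_{\mathcal{B},2}^{1/2}$ between $W_{\mathcal{B},\overp}^{1/\overp}$ and $m^{1/\overp-1/2}W_{\mathcal{B},2}^{1/2}$, then Proposition~\ref{prop:convrate_algo_stop} at power $\overp$, then the choice of $\overp$ so that $(m/n)^{1/\overp - 1/2}=\omega$, and finally the iteration count by expanding $1/(2-\overp)$ and absorbing $\ln\ln(m/n)$ into $\ln\ln(rm)$. Your extra remark that $\overp\in(1,2)$ requires $\lnmn > 2\ln\omega$ is a valid point the paper leaves implicit, though the suggested fallback bound for $\omega^2 \geq m/n$ should be stated in the norm $\|\cdot\|_{\bar B}$ rather than attributed directly to Proposition~\ref{prop:cond_number} (which is phrased in $\|\cdot\|_2$).
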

\begin{proof}
    As $ \overp < 2$, the classical inequality between $\ell_2$ and $\ell_{\overp}$ norms yields
    \[ \frac{1}{m^{\frac{1}{\overp} - \frac{1}{2}}} W_{\mathcal{B},\overp}(x)^{\frac{1}{\overp}} \leq W_{\mathcal{B},2}(x)^{\frac{1}{2}} \leq W_{\mathcal{B},\overp}(x)^{\frac{1}{\overp}}.\]
    Combining with inequality \eqref{eq:Capprox} from Proposition \ref{prop:convrate_algo_stop} gives
    \BEQ
    \frac{1}{m^{\frac{1}{\overp} - \frac{1}{2}}}\, e^{-\epsilon} \|x\|^2_{B(\overline{\tau})} \leq W_{\mathcal{B},2}(x)^{\frac{1}{2}} \leq e^{\epsilon}n^{1-\frac{1}{\overp}} \|x\|^2_{B(\overline{\tau})},
    \EEQ
    and we conclude by noticing that $\overp$ was chosen so that $\left(\frac{m}{n}\right)^{\frac{1}{\overp} - \frac{1}{2}} = \omega$. To estimate the total number of iterations, we use Proposition \ref{prop:convrate_algo_stop} again and write
    \[ \frac{1}{2-\overp} = \frac{1}{4} + \frac{\ln \frac{m}{n}}{4\ln\omega} = \mathcal{O}\left(\frac{\ln \frac{m}{n} }{ \ln \omega}\right), \]
    and $-\ln(2-p) = \mathcal{O}\left( \ln\ln \frac{m}{n} - \ln\ln \omega \right)= \mathcal{O}\left( \ln\ln {rm} - \ln\ln \omega \right)$.
\end{proof}

Note that, for practical purposes, a good preconditioner has the approximation ratio in a constant factor away from the optimal $\sqrt{n}$. Note that Corollary \ref{cor:pequal2} states that we can find such a matrix in $\mathcal{O}\left( \ln \frac{m}{n} \ln\ln(rm) \right)$ iterations of Algorithm \ref{algo:fp}.

\subsection{Application to quartic problems}\label{ss:cond_examples}

We now detail the consequences of the previous results on the preconditioning of quartics functions of the form \eqref{eq:rho_form1}.

\paragraph{Improvement of $B_*$ over $B(\tau^{(0)})$.} 
Computing the optimal parameters $\tau^*$ requires running Algorithm \ref{algo:fp} and might be costly. To evaluate the benefits, let us compare with the simpler \textit{uniform} choice $\tau^{(0)} = (m^{-\frac{1}{2}},\dots m^{-\frac{1}{2}})$, which corresponds to the inital point of the method. Writing $ B^{(0)} = m^{-\frac{1}{2}} \sum_{i=1}^m B_i$ the corresponding preconditioner, Proposition \ref{prop:levg_cond} yields

    \BEQ\label{eq:cond_B0}  \|x\|^2_{B^{(0)}} \leq \rho(x)^{1/2} \leq  \left(\max_{i=1 \dots m} m\,\ell_i( \tau^{(0)};\mathcal{B} )\right)^{1/2}  \|x\|^2_{B^{(0)}} = \sqrt{m \gamma(\mathcal{B})}\,  \|x\|^2_{B^{(0)}},
    \EEQ
where we define $\gamma(\mathcal{B})$ to be the \textit{coherence} of $\mathcal{B} = (B_1,\dots,B_m)$:
\BEQ\label{eq:def_coherence} \gamma(\mathcal{B}) = \max_{i=1\dots m} \ell_i(\tau^{(0)};\mathcal{B}) = \max_{i=1\dots m} \Tr\left[ \left(\sum_{i=1}^m B_i\right)^{-1} B_i \right],\EEQ
The condition number of $\rho$ with respect to the norm induced by $B^{(0)}$ is therefore upper bounded by $\sqrt{m \gamma(\mathcal{B})}$.
By Lemma \ref{lemma:levg_score} we have $\frac{n}{m}\leq \gamma(\mathcal{B})\leq r$, therefore
\BEQ \label{eq:ineq_kappa0} \underbrace{\sqrt{n}}_{\small \mbox{condition number w.r.t. } B_*} \leq \underbrace{\sqrt{m \gamma(\mathcal{B})}}_{\small\mbox{condition number w.r.t. } B^{(0)}} \leq \sqrt{mr}.\EEQ
Therefore, if the coherence of $\mathcal{B}$ is small, i.e. close to $n/m$, the approximation quality of $\tau^{(0)}$ and $\tau^*$ are similar. They become different if $\gamma(\mathcal{B})$ is high, and if $mr \gg n$. This is the situation where one could benefit from using $B(\tau^*)$ instead of $B^{(0)}$.

Note that if the operators $B_i$ are of rank 1 ($r=1$) with $B_i = a_i a_i^T$, then $\gamma(\mathcal{B})$ coincides with the \textbf{matrix coherence} of $[a_1 \dots a_m]^T$, as defined in statistics and numerical Linear Algebra \citep{MAHONEY2012}.

\paragraph{Preconditioning for distance matrix completion.}
Recall that for Problem \eqref{eq:DMC}, the quartic function writes for $X\in \reals^{N\times r}$
\BEQ
\rho(X) = \sum_{(i,j) \in \Omega} \|x_i-x_j\|_2^4  +\frac{1}{N^2} \|\sum_{i=1}^N x_i\|_2^4,
\EEQ
where $x_1\dots x_N$ are the rows of $X$ and $\Omega \subset \{1\dots N\}^2$.
We can write $\rho$ in the form \eqref{eq:rho_form1} as $\rho(X) = \sum_{(i,j) \in \Omega} \la B_{ij} (X),X \ra^2 + \frac{1}{N}\la \mathbf{1}_N\mathbf{1}_N^TX,X \ra$ where the operators $B_{ij}:\reals^{N \times r} \rightarrow \reals^{N \times r}$ induce the quadratic forms $\la B_{ij}(X),X\ra = \|x_i-x_j\|_2^2$.
The norm induced by the \textit{uniform preconditioner} $B^{(0)}$ writes
\begin{align*}
  \|X\|_{B^{(0)}}^2 &= \frac{1}{\sqrt{|\Omega|+1}} \left(\sum_{(i,j)\in \Omega} \| x_{i}-x_{j} \|^2 + \big\|
  \sum_{i=1}^N x_i
  \big\|^2 \right)\\
  &= \frac{1}{\sqrt{|\Omega|+1}} \sum_{k=1}^r \la \left(\mathcal{L}(\Omega) + \mathbf{1}_N\mathbf{1}_N^T\right) x_{\cdot k},x_{\cdot k} \ra,
\end{align*}
where $x_{\cdot k}$ is the $k$-th column of $X$ and $\mathcal{L}(\Omega) \in \reals^{N\times N}$ is the Laplacian matrix of the graph with vertices $\{1\dots N\}$ and edge set $\Omega$ \cite{chung1997spectral}. Then, one can show that the coherence of $\mathcal{B}$ as defined by \eqref{eq:def_coherence} is equal to
\[ \gamma(\mathcal{B}) = r \gamma(\Omega), \quad \gamma(\Omega) = \max_{(i,j) \in \Omega} \, \la L(\Omega)^{+}(e_i - e_j), e_i - e_j \ra, \] 
where $\mathcal{L}(\Omega)^+$ denotes the pseudoinverse of the Laplacian matrix and $e_1\dots e_N$ are the canonical basis vectors of $\reals^N$. The quantity $\gamma(\Omega) \in [\frac{N}{|\Omega|},1]$ is called the graph \textit{effective resistance} in spectral graph theory \cite{Ellens2011resistance}. $\gamma(\Omega)$ is equal to one if there exists an edge $(i,j)$ such that removing that edge from the graph would make it disconnected. 
On the contrary, $\gamma(\Omega)$ is low if all edges are of equal importance in determining the {connectiveness} of the graph.

According to the previous paragraph, the condition number of $\rho$ w.r.t. the norm induced by $B^{(0)}$ is $\kappa_0 = \sqrt{r |\Omega| \gamma(\Omega)}$, while the choice $B_*$ guarantees $\kappa^* = \sqrt{Nr}$. Therefore, using the optimal preconditioner $B^*$ is beneficial if the graph has \textbf{high effective resistance}, i.e. $|\Omega|\gamma(\Omega) \gg N$.

\section{Numerical experiments}\label{s:numerical}

We consider the following quartic problem
\BEQ\label{eq:toy_pb} \min_{x \in \reals^n} \rho(x) - \la c,x \ra,\quad \rho(x)=\sum_{i=1}^m \la a_i,x \ra^4, 
\EEQ
where $a_1\dots a_m, c \in \reals^n$ are generated randomly. We propose to examine the effect of homogenization, acceleration and preconditioning on different instances.


\begin{figure}[t!]
  \centering
  \subfigure[$\frac{\sigma_{\rm max}(A)}{ \sigma_{\rm min}(A)} = 5$]{
    \includegraphics[width = 0.45\columnwidth]{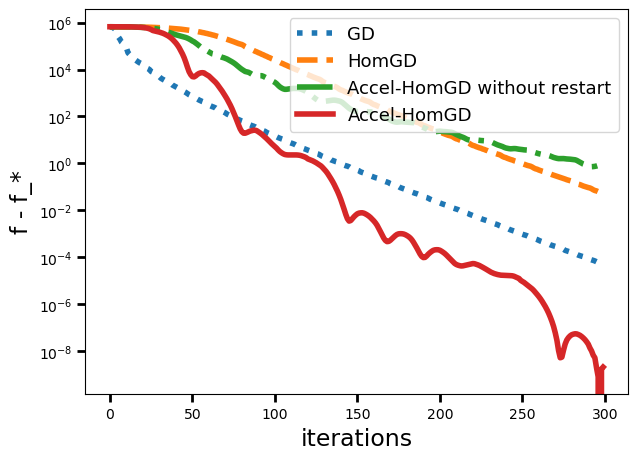}
  }
  \subfigure[$\frac{\sigma_{\rm max}(A)}{ \sigma_{\rm min}(A)} = 50$]{
    \includegraphics[width = 0.45\columnwidth]{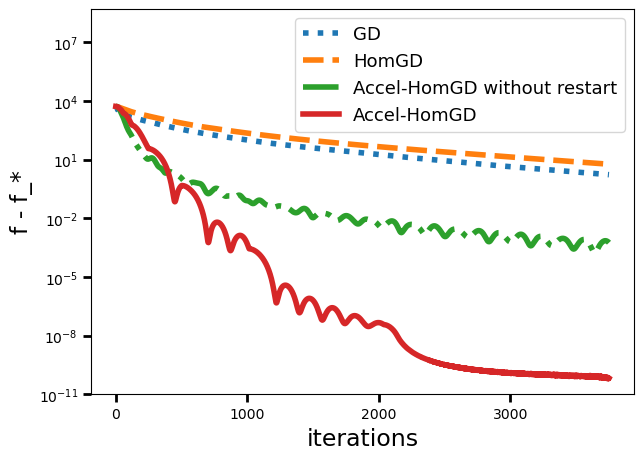}
  }
\caption{\small Comparison of different methods on the synthetic quartic problem \eqref{eq:toy_pb} for different condition numbers of $A$. We plot the objective gap $f(x_k)-f_*$, where $f_*$ is estimated as the minimal objective value computed by all methods. We observe that the restart scheme used in \texttt{Accel-HomGD} (Algorithm \ref{algo:fgd}) is beneficial.}
\label{fig:expes1}

\end{figure}

\subsection{Homogenization and acceleration}

We consider four different algorithms for solving \eqref{eq:toy_pb}. First, we implement our method \texttt{HomGD} (Algorithm \ref{algo:gd}) and its accelerated counterpart \texttt{Accel-HomGD} (Algorithm \ref{algo:fgd}). We also try Algorithm \ref{algo:fgd} \textbf{without restart}, in order to evaluate the practical relevance of the restart technique, and to prove that it is not a mere theoretical construction.
We compare these methods to \texttt{GD}, the standard Euclidean gradient descent method. For all variants, we use the usual Armijo line search strategy to adjust the step size.

We generate instances of \eqref{eq:toy_pb} with random matrices $A \in \reals^{m\times n}$ whose singular values are sampled uniformly in the interval $[\sigma_{\rm min}, 1]$, and random unit Gaussian vectors $c$. We choose $(m,n) = (2000,1000)$. Figure \ref{fig:expes1} reports the results for two different values of~$\sigma_{\rm min}$.

We observe (i) that \texttt{GD} and \texttt{HomGD} have similar practical performance, and therefore that the advantages of \texttt{HomGD} are mostly theoretical (convergence guarantees for fixed step size which does not depend on domain radius); (ii) that accelerated variants improve convergence speed for ill-conditioned problems; (iii) that the restart scheme used in Algorithm \ref{algo:fgd} has a significant impact on convergence speed, and therefore is not just a theoretical construct.

\begin{figure}[t!]
  \centering
  \subfigure[low coherence: $\gamma(A) \approx \frac{n}{m}$]{
    \includegraphics[width = 0.45\columnwidth]{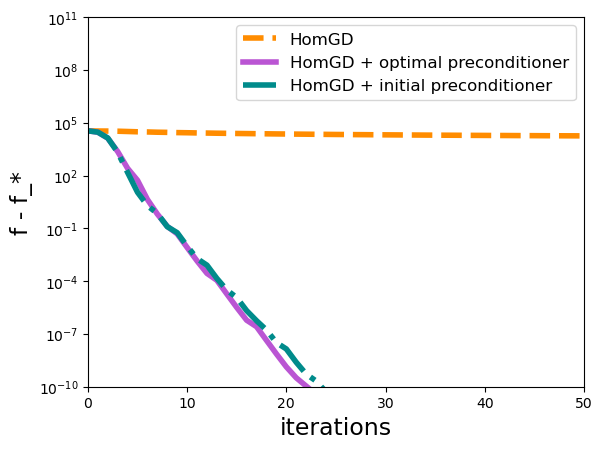}
  }
  \subfigure[high coherence: $\gamma(A) \approx 1$]{
    \includegraphics[width = 0.45\columnwidth]{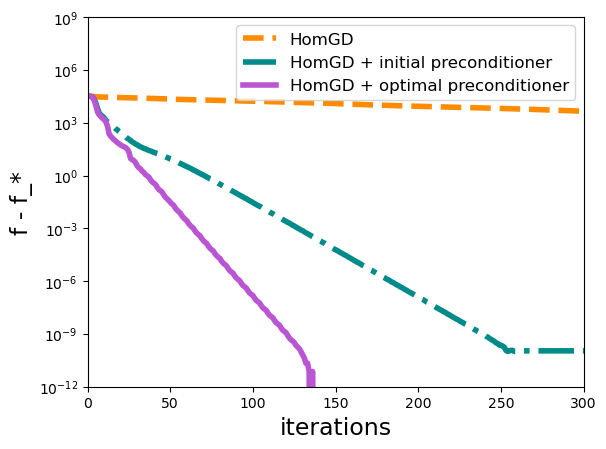}
  }
\caption{\small Performance of the homogenized gradient method (Algorithm \ref{algo:gd}) on Problem \eqref{eq:toy_pb}} with three different preconditioners: \texttt{HomGD}\small uses the standard Euclidean norm ($B = I_n$), \texttt{HomGD + optimal preconditioner}\small uses the norm induced by $B^*$ as computed by Algorithm \ref{algo:fp}, while \texttt{HomGD + initial preconditioner}\small uses the norm induced by $B^{(0)}$, the initial iterate of Algorithm \ref{algo:fp}. As predicted by Section \ref{ss:cond_examples}, the impact of using $B^*$ over $B^{(0)}$ is significant only for highly coherent matrices.

\label{fig:coherence}

\end{figure}

\subsection{Preconditioning} Let us evaluate performance of the preconditioning scheme of Section \ref{s:precond}. Recall that we compared in Section \ref{ss:cond_examples} two preconditioners:
\begin{align}
  &\mbox{Initial preconditioner: } B^{(0)} = \frac{1}{\sqrt{m}}\sum_{i=1}^m a_i a_i^T,  &\kappa_0 = \sqrt{m \gamma(A)} &\in [\sqrt{n},\sqrt{mr}];\\
  &\mbox{Optimal preconditioner: } B^*,  &\kappa_* = \sqrt{n},\hspace{6ex}&
\end{align}
where $\gamma(A)=\max_{i=1\dots m} \la (A^TA)^{-1}a_i,a_i \ra$ is the \textit{coherence} of matrix $A \in \reals^{m\times n}$ with rows $a_1,\dots a_m$. 
Additionnally, since $\sigma_{\rm min}(A)^2 I_n \preceq B^{(0)} \preceq \sigma_{\rm max}(A)^2 I_n$, we can also estimate the condition number w.r.t. the standard Euclidean norm:
\begin{align}
  &\hspace{-5ex}\mbox{No preconditioner } (B = I_n): \hspace{10ex}& \kappa_{I_n} \leq \frac{\sigma_{\rm max}(A)^2}{\sigma_{\rm min}(A)^2} \sqrt{m \gamma(A)}.
\end{align}
We run Algorithm \ref{algo:fp} to compute $B^*$, and compare its performance to $B^{(0)}$ and $I_n$ when used with the homogenized gradient method (Algorithm \ref{algo:gd}).

 We consider problem \eqref{eq:toy_pb} with $m = 500$, $n = 20$. We generate random matrices $A$ which are ill-conditioned, with $\sigma_{\rm max}(A) / \sigma_{\rm min}(A) = 100$, and with different coherence values\footnote{In order to generate matrices $A$ with given coherence, we use the approach of \citep[Section 6.1.]{Ipsen2012}}. Figure \ref{fig:coherence} reports the results for a matrix with high coherence ($\gamma(A) \approx 1$) and a low-coherence matrix ($\gamma(A) \approx \frac{n}{m}$). 

 We observe that (i) since the spectral gap of matrix $A$ is large, using the norm induced by either $B_*$ or $B^{(0)}$ significantly improves performance over the standard Euclidean norm; (ii) the gap between $B_*$ and $B^{(0)}$ is significant only for highly-coherent matrices, as predicted by theory.

\section{Conclusion and perspectives}

We have demonstrated that certain quartic convex problems could be efficiently solved by transforming them into a equivalent \textit{homogenized} problem. The homogenized problem being smooth and convex, it can be solved with standard gradient methods. Our algorithms enjoy {convergence guarantees} with \textbf{fixed step size} and allow for \textbf{acceleration}. Previous methods, such as Euclidean or Bregman gradient descent, were not able satisfy both criteria simultaneously on quartic problems. 

Note that we did not covert the case when the operator $Q$ is positive semidefinite, that is $\alpha = 0$. Under additional technical conditions, we could extend our method to handle this situation. However, we would not benefit from improved rates over those of standard gradient methods for general smooth convex functions.

Naturally, the next goal is to extend this technique to the class of general convex quartic polynomials of the form
\BEQ\label{eq:general_poly} \min_{x \in E} \, Q[x]^4 + P[x]^3 + A[x, x] + \la c, x\ra,
\EEQ
where $Q,P,A$ are multilinear maps of respective degrees 4, 3 and 2, as our current homogenization procedure cannot handle terms of degree 2 and 3 in the objective.

In our second contribution, we studied a method for finding a appropriate preconditioner $B_*$ for the quartic function by using a variant of the Lewis weights. One could ask if this technique can be extended to general polynomials of the form \eqref{eq:general_poly}. Another possible direction for future research is to find a more efficient method for computing $\tau^*$, as the complexity of $\mathcal{\tilde{O}}(mrn^2)$ can be prohibitive for large-scale problems. A way to achieve this would be to compute the quantities $\ell_i(\tau;\mathcal{B})$ only approximatively.

{\small
\paragraph{Acknowledgements.} 
The authors thank the two anonymous reviewers for their constructive comments which greatly contributed to improve the paper quality.

 This paper has received funding from the European Research Council (ERC) under the European Union’s Horizon 2020 research and innovation program (grant agreement No 788368).

  It was also supported by Multidisciplinary Institute in Artificial intelligence MIAI@Grenoble Alpes (ANR-19-P3IA-0003).

}

{\small \bibliographystyle{plainnat_modif}
\bibsep 1ex
\bibliography{library,library_supp}}
\end{document}